\newtheorem{theorem}{Theorem}[section]
\newtheorem{lemma}[theorem]{Lemma}
\newtheorem{prop}[theorem]{Proposition}
\newtheorem{corollary}[theorem]{Corollary}
\newtheorem{claim}[theorem]{Claim}
\theoremstyle{definition}
\newtheorem{definition}[theorem]{Definition}
\newtheorem{example}[theorem]{Example}
\theoremstyle{remark}
\newtheorem{remark}[theorem]{Remark}
\numberwithin{equation}{section}
\DeclareMathOperator{\Sym}{Sym}
\DeclareMathOperator{\proj}{Proj}
\DeclareMathOperator{\Jac}{Jac}
\DeclareMathOperator{\chark}{char}
\DeclareMathOperator{\SL}{SL}
\DeclareMathOperator{\GL}{GL}
\DeclareMathOperator{\codim}{codim}
\DeclareMathOperator{\Grass}{Grass}
\DeclareMathOperator{\Res}{Res}
\DeclareMathOperator{\Hom}{Hom}
\def\AA{\mathbf{A}}
\newcommand{\gitq}{/\hspace{-0.25pc}/}
\def\A{\mathcal{A}}
\def\co{\colon\thinspace} 
\def\mi{\mathfrak{m}}
\def\cA{\mathcal{A}}
\def\RR{\mathbb{R}}
\def\PP{\mathbb{P}}
\def\CC{\mathbb{C}}
\def\HH{\mathrm{H}}
\def\cO{\mathcal{O}}
\DeclareMathOperator{\Soc}{Soc}
\begin{document}

\title[Stability of associated forms]{Stability of associated forms}

\author[Maksym Fedorchuk]{Maksym Fedorchuk}
\address[Fedorchuk]{Department of Mathematics\\
Boston College\\
140 Commonwealth Ave\\
Chestnut Hill, MA 02467, USA}
\email{maksym.fedorchuk@bc.edu}

\author[Alexander Isaev]{Alexander Isaev}
\address[Isaev]{Mathematical Sciences Institute\\
Australian National University\\
Acton, Canberra, ACT 2601, Australia}
\email{alexander.isaev@anu.edu.au}

\begin{abstract}We show that the associated form, or, equivalently, a Macaulay inverse system, 
of an Artinian complete intersection of type $(d,\dots, d)$ is polystable. As an application, we obtain an invariant-theoretic variant of the Mather-Yau theorem for homogeneous hypersurface singularities.
\end{abstract}

\maketitle

\section{Introduction}
In this paper, we establish the GIT polystability of 
Macaulay inverse systems for Gorenstein Artin algebras
given by balanced complete intersections.  
This leads to a purely invariant-theoretic solution to the problem of deciding when two such algebras
are isomorphic. An important example of a balanced complete intersection is the 
Milnor algebra of an isolated homogeneous hypersurface singularity and so, as an application
of our polystability result, we obtain an algebraic variant of the Mather-Yau theorem for such singularities
over an arbitrary field of characteristic zero. 

We will now explain our approach. Recall that 
for two homogeneous forms $f(x_1,\dots,x_n)$ and $g(x_1,\dots,x_n)$ over a field $k$, 
the problem of determining whether one can be obtained from the other by a linear
change of variables can often be solved 
by a purely algebraic method offered by Geometric Invariant Theory (GIT).  Namely, if
$\deg f=\deg g=d$, then the question can be rephrased as whether we have the equality of the orbits
\[
\GL(n)\cdot f=\GL(n)\cdot g
\]
under the natural action of $\GL(n)$ on  
$\Sym^d V$, where $V$ is the standard representation of $\GL(n)$. 
When $f$ and $g$ are polystable in the sense of GIT, 
their orbits in $\Sym^d V$ can be distinguished using invariants. Namely, the two orbits 
are distinct if and only if there exists a homogeneous $\SL(n)$-invariant $\mathfrak{I}$ on $\Sym^d V$,
that is, an $\SL(n)$-invariant homogeneous element of $\Sym \bigl(\Sym^d V\bigr)^{\vee}$, such that  
$\mathfrak{I}(f)= 0$ and $\mathfrak{I}(g)\neq 0$. 
Furthermore, the Gordan-Hilbert theorem implies that we can find finitely many
homogeneous $\SL(n)$-invariants $\mathfrak{I}_1,\dots, \mathfrak{I}_N$ of equal degrees 
such that for polystable $f$ and $g$, we have  
\begin{equation}\label{E:distinguish}
\GL(n)\cdot f=\GL(n)\cdot g  
\,\Longleftrightarrow\,
[\mathfrak{I}_1(f): \cdots:\mathfrak{I}_N(f)]=[\mathfrak{I}_1(g): \cdots:\mathfrak{I}_N(g)].
\end{equation}

One can consider a generalization of the above question and ask when two $m$-dimensional linear systems 
$\langle f_1,\dots, f_m\rangle$ and $\langle g_1,\dots, g_m\rangle$ of degree $d$ forms 
are related by a linear change of variables. This again can be phrased 
in terms of a GIT problem, this time given by the action of $\SL(n)$ on $\Grass(m, \Sym^{d} V)$
(or the $\SL(n)$-action on the affine cone over the Grassmannian in its Pl\"ucker embedding).
A priori, to distinguish orbits of this action, one needs to understand polynomial 
$\SL(n)$-invariants on $\wedge^{m} \Sym^{d} V$.
One of the main results of this paper is that for $m=n$, i.e., when the number of forms is equal to the number
of variables, the problem of distinguishing $\SL(n)$-orbits in 
$\Grass(n, \Sym^{d} V)$ can often be reduced to that of distinguishing $\SL(n)$-orbits of degree $n(d-1)$
forms in $n$ (dual!) variables.  

The reason behind this simplification 
is that a generic $n$-dimensional subspace $U\subset \Sym^{d} V$ is spanned by a
regular sequence $g_1,\dots, g_n$. The algebra \[\cA:=k[x_1,\dots,x_n]/(g_1,\dots,g_n)\]
is then 
a graded local Gorenstein Artin algebra of socle degree $n(d-1)$. 
The homogeneous Macaulay inverse system of this algebra
is then an element of $\PP \Sym^{n(d-1)} V^{\vee}$, which we call the associated form of $U$.
A classical theorem of Macaulay says that the associated form morphism
$\AA$ sending $U$ to its associated form is injective
(see Theorem \ref{macaulay-theorem}). 
Alper and Isaev, who initiated a systematic
study of this morphism, showed that 
$\AA$ is a locally closed immersion and 
conjectured that $\AA(U)$ is always semistable and that the induced 
morphism on the GIT quotients is also a locally closed immersion; we refer the reader to 
\cite{alper-isaev-assoc-binary} for details, for the motivation behind these conjectures,
and for a proof in the case of binary forms. In \cite{fedorchuk-ss}, the first author
proved that $\AA(U)$ is indeed semistable for any $U$ spanned by a regular sequence. Here we show:

\begin{theorem}[{Theorem \ref{T:polystability}}]
\label{T:polystabilityintro}
Assume $\chark(k)=0$. Suppose that an element $U \in \Grass(n, \Sym^{d} V)$ is spanned by a regular sequence and is polystable. Then $\AA(U)$ is polystable. 
\end{theorem}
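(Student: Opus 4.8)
The plan is to combine the known semistability of $\AA(U)$ with the Hilbert--Mumford characterization of closed orbits, transporting the hypothesis that $U$ is polystable through the equivariance of $\AA$. Recall first that $\AA$ is $\SL(n)$-equivariant and, by Macaulay's theorem together with the Alper--Isaev result, an injective locally closed immersion on the locus $\Grass(n,\Sym^{d}V)^{\mathrm{reg}}$ of subspaces spanned by a regular sequence (abbreviated $\Grass^{\mathrm{reg}}$); hence it identifies the orbit $\SL(n)\cdot U$ with $\SL(n)\cdot\AA(U)$. Since $\AA(U)$ is already known to be semistable by \cite{fedorchuk-ss}, it remains to prove that its orbit is closed in the semistable locus, and this may be checked over $\bar k$ since polystability is a geometric condition. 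I would use the standard criterion: a semistable point $x$ is polystable if and only if, for every one-parameter subgroup $\lambda$ of $\SL(n)$ such that $\lim_{t\to 0}\lambda(t)\cdot x$ is semistable, this limit lies in $\SL(n)\cdot x$.

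So fix a one-parameter subgroup $\lambda$ and suppose $F:=\lim_{t\to 0}\lambda(t)\cdot\AA(U)$ is semistable; this limit exists because $\PP\Sym^{n(d-1)}V^{\vee}$ is proper, and $F$ semistable together with $\AA(U)$ semistable forces $\mu(\AA(U),\lambda)=0$. Form the $\lambda$-fixed limit $U':=\lim_{t\to 0}\lambda(t)\cdot U$ in the Grassmannian. Suppose first that $U'$ again lies in $\Grass^{\mathrm{reg}}$. Then continuity and equivariance of $\AA$ give $F=\lim_{t\to 0}\AA(\lambda(t)\cdot U)=\AA(U')$. Choosing $\lambda$-homogeneous generators $f_1,\dots,f_n$ of $U'$, of $\lambda$-weights $b_1,\dots,b_n$, the socle-degree formula for a graded complete intersection shows that the one-dimensional socle of $\cA':=k[x_1,\dots,x_n]/(f_1,\dots,f_n)$ has $\lambda$-weight $\sum_i b_i-\sum_j a_j=\sum_i b_i$, where $a_j$ are the weights of the variables and $\sum_j a_j=0$ since $\lambda$ lands in $\SL(n)$. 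As $\AA(U')$ is the dual socle generator, its $\lambda$-weight is $-\sum_i b_i$, the negative of the Pl\"ucker weight $\sum_i b_i$ of $U'$; consequently the two Hilbert--Mumford weights $\mu(\AA(U),\lambda)$ and $\mu(U,\lambda)$ vanish simultaneously, so $\mu(U,\lambda)=0$. Because $U$ is polystable, $U'=\lim_{t\to 0}\lambda(t)\cdot U$ then lies in $\SL(n)\cdot U\subset\Grass^{\mathrm{reg}}$, and applying $\AA$ gives $F=\AA(U')\in\SL(n)\cdot\AA(U)$, as required.

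The main obstacle is the remaining case, in which the degeneration leaves the regular-sequence locus, i.e. $U'\notin\Grass^{\mathrm{reg}}$, so that the $\lambda$-leading forms $f_1,\dots,f_n$ spanning $U'$ acquire a common projective zero and $\cA'$ is no longer Artinian. Here $\AA(U')$ is undefined, the socle computation above is unavailable, and $F$ is merely some limiting form on the boundary of $\AA(\Grass^{\mathrm{reg}})$. The crux is to show that this is incompatible with $F$ being semistable; equivalently, that whenever the complete intersection degenerates to a non-Artinian algebra, the corresponding limit $F$ of associated forms is \emph{unstable}, so that this case never arises under our hypothesis. I would attempt this by analysing $F$ directly: the failure of $f_1,\dots,f_n$ to be a regular sequence should force $F$ to be annihilated by an ideal cutting out a positive-dimensional scheme, so that $F$ is supported, up to the $\SL(n)$-action, on a proper coordinate subspace or carries a point of abnormally high multiplicity, and is therefore destabilized by an explicit one-parameter subgroup via the Hilbert--Mumford criterion. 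Making this precise, in particular controlling the limit of inverse systems under a degeneration to a non-complete-intersection, is the technical heart of the argument and is where the methods of \cite{fedorchuk-ss} will have to be pushed furthest.
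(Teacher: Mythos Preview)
Your Case~1 is essentially correct: once $U'\in\Grass^{\mathrm{reg}}$, the socle weight computation gives the identity $\mu(\AA(U),\lambda)=-\mu(U,\lambda)$, and polystability of $U$ finishes the argument. The problem is Case~2, which you correctly flag as the crux but do not prove, and whose sketch does not lead anywhere. Your suggestion that $F$ is ``annihilated by an ideal cutting out a positive-dimensional scheme'' is not useful: one does have $(U')\subset F^{\perp}$ (this follows by taking $\lambda$-initial parts of the relations $g_i\circ\AA(U)=0$), but $F^{\perp}$ is \emph{always} an Artinian Gorenstein ideal by Macaulay's theorem, regardless of whether $(U')$ has a common zero. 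So the geometry of $(U')$ gives no direct destabilization of $F$, and the ``abnormally high multiplicity'' heuristic has no obvious implementation. Everything you have written leaves untouched the possibility that $\mu(\AA(U),\lambda)=0$ while $U'$ leaves the regular-sequence locus; ruling this out is the entire content of the theorem.

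The paper proceeds quite differently and never analyses the limit $F$ or the degeneration $U'$ at all. The dichotomy is not ``$U'$ regular vs.\ not'' but ``$U$ indecomposable vs.\ decomposable'' (Definition~\ref{D:decomposable}), and the argument is by induction on $n$. For indecomposable $U$ one proves the stronger statement that $\AA(U)$ is \emph{stable}: assuming $\mu(\AA(U),\lambda)=0$, the Grevlex Lemma together with a nontrivial commutative-algebra recognition criterion (Proposition~\ref{P:recognition}) forces $I_d\cap k[x_{a+1},\dots,x_n]$ to have dimension $n-a$ for a suitable $a$, so $U$ is decomposable, a contradiction. The key technical input is Proposition~\ref{P:recognition}, which shows that a balanced complete intersection $I$ satisfying $(x_{b+1},\dots,x_n)^{(n-b)(d-1)+1}\subset I$ and having complete-intersection image modulo $(x_{b+1},\dots,x_n)$ must already contain $n-b$ generators in $k[x_{b+1},\dots,x_n]$; this is proved by comparing syzygies of the power ideal with the Koszul complex of $I$. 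For decomposable polystable $U$, Corollary~\ref{C:decomposable} writes $U$ as a direct sum $U_1\oplus U_2$, the product formula $\AA(U)\propto\AA(U_1)\AA(U_2)$ reduces to lower $n$, and a standard centralizer argument (Lemma~\ref{L:luna}) transports polystability through the product. None of this involves your Case~2 at all.
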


\noindent Consequently, injectivity is preserved on the level of GIT quotients, just as Alper and Isaev conjectured.

As an application of Theorem \ref{T:polystabilityintro}, 
we obtain an invariant-theoretic variant of the Mather-Yau theorem 
for isolated homogeneous hypersurface singularities. The original version of this theorem, 
proved in \cite{mather-yau}, states that an isolated hypersurface singularity in $\CC^n$ is 
determined, up to biholomorphism, by $n$ and the isomorphism class of its moduli (Tjurina) 
algebra. The theorem was extended to the case of non-isolated hypersurface singularities in 
\cite[Theorem 2.26]{greuel-lossen-shustin}. Further, in \cite[Proposition 2.1]{greuel-pham} it 
was established for arbitrary algebraically closed fields of characteristic $0$. Finally, for 
singularities over  algebraically closed fields of arbitrary characteristic, an analogue 
of the theorem was proved in \cite[Theorem 2.2]{greuel-pham}. 
For more details on the history of the Mather-Yau theorem 
we refer the reader to \cite{greuel-pham}.

 The Mather-Yau theorem is non-trivial even for homogeneous singularities and raises a natural question of how exactly a singularity is encoded by the corresponding algebra (see \cite{benson}). 
If $f(x_1,\dots,x_n)=0$ is such a
singularity, defined by a form of degree $d+1$, then its moduli algebra coincides with its Milnor algebra $M_f$, 
which has an associated form $A(f) \in \Sym^{n(d-1)} V^{\vee}$, first studied in \cite{alper-isaev-assoc}.
Our polystability result implies that two forms $f, g\in \Sym^{d+1} V$ define isomorphic isolated
hypersurface singularities (i.e., the completions of the local rings of the hypersurfaces $\{f=0\}$
and $\{g=0\}$ at the origin are isomorphic over the algebraic closure of the field) if and only if
their associated forms $A(f)$ and $A(g)$ map to the same point 
in the GIT quotient $\PP \Sym^{n(d-1)} V^{\vee}\gitq \SL(n)$,
something that can be detected by finitely many homogeneous $\SL(n)$-invariants just as in \eqref{E:distinguish}.
Since the associated form $A(f)$ is computable from the Milnor algebra alone, we
obtain a purely algebraic, and in principle algorithmic, way of deciding when two isolated 
homogeneous hypersurface singularities are isomorphic based solely on their Milnor algebras:
\begin{theorem}[{Theorem \ref{T:invariant-MY}}]
\label{T:invariant-MY-intro}
There exists a finite collection
of homogeneous $\SL(n)$-invariants $\mathfrak{I}_1,\dots, \mathfrak{I}_N$ 
on $\Sym^{n(d-1)} V^{\vee}$ of equal degrees, defined over $k$,  
such that for any two forms $f, g\in \Sym^{d+1} V$ defining isolated
singularities, the two singularities are isomorphic
if and only if
\[
[\mathfrak{I}_1(A(f)): \cdots:\mathfrak{I}_N(A(f))]
=[\mathfrak{I}_1(A(g)): \cdots:\mathfrak{I}_N(A(g))].
\]
\end{theorem}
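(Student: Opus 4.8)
The plan is to combine the polystability result (Theorem \ref{T:polystabilityintro}) with the algebraic Mather-Yau theorem of \cite{greuel-pham} and the injectivity of $\AA$, so that ``isomorphic singularities'' gets translated into an honest equality of $\GL(n)$-orbits of associated forms, which can then be read off from invariants exactly as in \eqref{E:distinguish}. First I would make the dictionary between singularities and Grassmannian points precise. For homogeneous $f\in\Sym^{d+1}V$ with an isolated singularity, the Milnor algebra $M_f=k[x_1,\dots,x_n]/(\partial_1 f,\dots,\partial_n f)$ coincides with the moduli algebra and is a standard graded Artinian Gorenstein algebra; the partials form a regular sequence of degree $d$, so $M_f=\cA$ for the gradient point $U_f:=\langle \partial_1 f,\dots,\partial_n f\rangle\in\Grass(n,\Sym^d V)$, which is precisely the degree-$d$ piece of the Jacobian ideal. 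By Mather-Yau over $\bar k$, the two singularities are isomorphic iff $M_f\cong M_g$ as local $k$-algebras; since both algebras are standard graded, the maximal ideal $\mathfrak{m}$ is characteristic and $\mathrm{gr}_{\mathfrak{m}}M_f\cong M_f$, so a local isomorphism upgrades to a graded one. As $M_f$ is generated in degree $1$ with $(M_f)_1=V$ (here $d\ge 2$), such a graded isomorphism is induced by an element of $\GL(n)$ carrying $U_f$ to $U_g$. Hence the singularities are isomorphic iff $\GL(n)\cdot U_f=\GL(n)\cdot U_g$.

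Next I would transport this to the associated forms. The morphism $\AA$ is $\GL(n)$-equivariant and, by Macaulay's Theorem \ref{macaulay-theorem}, injective, with $\AA(U_f)=A(f)$. Therefore $\GL(n)\cdot U_f=\GL(n)\cdot U_g$ iff $\GL(n)\cdot A(f)=\GL(n)\cdot A(g)$ in $\Sym^{n(d-1)}V^{\vee}$. It then remains to see that equality of these $\GL(n)$-orbits is exactly what the invariants detect. Choosing, via Gordan-Hilbert, homogeneous $\SL(n)$-invariants $\mathfrak{I}_1,\dots,\mathfrak{I}_N$ of one common degree generating the invariant ring, the point $[\mathfrak{I}_1(A(f)):\cdots:\mathfrak{I}_N(A(f))]$ is $\GL(n)$-invariant (scalars act by a common character) and equals the image of $A(f)$ in $\PP\Sym^{n(d-1)}V^{\vee}\gitq\SL(n)$. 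By \cite{fedorchuk-ss} every $A(f)$ is semistable, so this point is defined; and two semistable points share the same image iff they are $S$-equivalent, i.e.\ the closures of their orbits meet. This already gives the forward implication of the theorem. For the converse I need the orbits themselves, not just their closures, to coincide, and that requires $A(f)$ and $A(g)$ to be polystable.

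The crux, and the place where Theorem \ref{T:polystabilityintro} is invoked, is thus to show that $A(f)=\AA(U_f)$ is polystable for every isolated singularity. By that theorem it suffices to prove that the gradient point $U_f$ is polystable, and this is where the isolated-singularity hypothesis enters: the projective hypersurface $\{f=0\}\subset\PP^{n-1}$ is smooth, so $f$ is a stable point of $\Sym^{d+1}V$ (classical stability of smooth hypersurfaces of degree $d+1\ge 3$), and I would deduce polystability---in fact stability---of $U_f$ from this, either by a direct Hilbert-Mumford weight computation for the gradient subspace or via $\GL(n)$-equivariance of the gradient map together with finiteness of the stabilizer. I expect this polystability of the gradient point to be the main obstacle: semistability of $A(U)$ is known in general, but ruling out strictly semistable one-parameter degenerations of $U_f$ for \emph{arbitrary} smooth $f$ is the delicate part, as equivariant rational maps need not preserve stability.

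Granting that, $A(f)$ and $A(g)$ are polystable, so having the same image in the GIT quotient forces $\GL(n)\cdot A(f)=\GL(n)\cdot A(g)$; unwinding the chain of equivalences established above then yields an isomorphism of the two singularities, completing the converse and hence both directions. Finally, since the quotient is a $k$-scheme, comparing the $k$-points $A(f),A(g)$ in it is equivalent to comparing their images over $\bar k$, so invariants $\mathfrak{I}_1,\dots,\mathfrak{I}_N$ defined over $k$ indeed suffice to detect isomorphism over the algebraic closure.
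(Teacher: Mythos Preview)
Your overall strategy is sound and close in spirit to the paper's, but there are two points worth flagging: one is a genuine difference in route, the other is a soft spot in your argument.

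\textbf{Different route.} You invoke the Mather--Yau theorem of \cite{greuel-pham} to pass from ``isomorphic singularities'' to ``isomorphic Milnor algebras'' and thence to $\GL(n)\cdot U_f=\GL(n)\cdot U_g$. The paper deliberately avoids this: since the theorem is billed as an \emph{invariant-theoretic variant} of Mather--Yau, the authors want an argument independent of it. They instead observe directly (via lifting an isomorphism of completed local rings to an automorphism of the power series ring and taking its linear part) that isomorphism of singularities is equivalent to $\GL(n)\cdot f=\GL(n)\cdot g$, and then use that the composite $A=\AA\circ\nabla$ induces an \emph{injective} map on GIT quotients. The injectivity of $\AA\gitq\SL(n)$ comes from the polystability theorem (your Theorem~\ref{T:polystabilityintro}) together with the fact that $\AA$ is a locally closed immersion; the injectivity of $\nabla\gitq\SL(n)$ is quoted from \cite[Proposition~2.1(2)]{fedorchuk-ss}. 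Your route through Mather--Yau and pointwise injectivity of $\AA$ is logically fine, but it makes the result a \emph{consequence} of Mather--Yau rather than an independent approach to it.

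\textbf{The gap.} Both your argument and the paper's require that the gradient point $U_f=\nabla f$ be polystable, and you correctly identify this as the crux. However, neither of your two proposed mechanisms actually establishes it. Equivariance of $\nabla$ together with finiteness of the stabilizer of $f$ does \emph{not} force $\nabla f$ to have a closed orbit: an equivariant morphism can carry a closed orbit to a non-closed one unless one knows something like properness or finiteness of the map, which is not available here. A direct Hilbert--Mumford computation can be made to work, but it is genuinely delicate and is precisely the content of \cite[Theorem~1.1]{fedorchuk-ss}, which the paper simply cites. So your proof is complete once you import that result; your sketch of an ad hoc argument for it should be replaced by the citation.

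In summary: modulo the polystability of $\nabla f$ (which you should take from \cite{fedorchuk-ss} rather than try to reprove), your argument is correct. The paper's route is preferable mainly because it does not presuppose Mather--Yau, yielding a self-contained invariant-theoretic statement.
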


\subsection*{Notation and conventions}
We work over a field $k$ of characteristic $0$ (not necessarily algebraically closed). 
The dual of a $k$-vector space will be denoted by ${}^\vee$.
Fix $n\geq 1$ and let $V$ be an $n$-dimensional vector space over $k$. 
Let $S:=\Sym V$ be the symmetric algebra on $V$ with the standard grading.

We briefly recall some basic notions of GIT utilized in this paper. Our main reference for GIT is \cite{GIT}, but the reader is also referred, e.g., to \cite[Chapter 9]{lakshmibai-raghavan} for a more elementary exposition that uses modern terminology.
Suppose $W$ is an algebraic representation of a reductive group $G$.
Then $x\in W$ is \emph{semistable} if $0 \notin \overline{G\cdot x}$
and \emph{polystable} if $G\cdot x$ is closed. Similarly,
for $\bar x\in \PP W$, we say that $\bar x$ is semistable (resp., polystable) if some (equivalently, any) lift $x$ of $\bar x$ to $W$
is semistable (resp., polystable). 
The locus of semistable points in $\PP W$ is open and is denoted by $\PP W^{ss}$. 
More generally, if $X\subset\PP W$ is a $G$-invariant projective closed subscheme, then one defines 
the locus of semistable points in $X$ as $X^{ss}:= X\cap\PP W^{ss}$.
The orbits of polystable $k$-points in $X^{ss}$ are in bijection with the $k$-points of the projective
GIT quotient 
\[
X^{ss} \gitq G:= \proj \bigoplus_{m\geq 0} \HH^0\bigl(X, \cO_{X}(m)\bigr)^{G}.
\]
In particular, polystable orbits in $\PP W^{ss}$ (and more generally in $X^{ss}$) 
are distinguished by $G$-invariant 
forms on $\PP W$.  It will be crucial for us that 
in the case of a perfect field, semistability and polystability is determined by the 
standard Hilbert-Mumford numerical criterion; see \cite{kempf} for more details.

Since the definition of the associated form $A(f)$ requires a large enough characteristic
(at the very least, we need $\chark(k)\nmid \deg(f)$ in order for the partial derivatives
of $f$ to form a regular sequence), and our proof of polystability relies on characteristic $0$
results, our Theorems \ref{T:polystabilityintro} and \ref{T:invariant-MY-intro}
require $\chark(k)=0$. The reader can verify that as long as $A(f)$ is defined 
and the field is perfect, our proof of the semistability of $A(f)$ goes through. 
At the moment, we are not aware of any counterexamples to the polystability statement of Theorem \ref{T:polystabilityintro} for fields of (sufficiently large) positive characteristic.

\subsection*{Roadmap of the paper} In Section \ref{S:prelim}, we introduce the main 
actors of this work, the balanced complete intersection algebras and their associated forms, 
and state our principal result (Theorem \ref{T:polystability}).
In Section \ref{S:algebra-prop}, we prove a key technical
commutative algebra proposition.
In Section \ref{S:polystability}, 
we prove Theorem \ref{T:polystability}.
Finally, in Section \ref{S:mather-yau}, we give applications of our preservation of polystability result,
the main of which is an invariant-theoretic variant of the Mather-Yau theorem.

\subsection*{Acknowledgements} During the preparation of this work,
the first author was supported by 
the NSA Young Investigator grant H98230-16-1-0061 and Alfred P. Sloan Research Fellowship,
and the second author was supported by the Australian Research Council. We would like to thank the referees for their very thorough reading of the manuscript and for making many useful suggestions that helped improve the paper.

\section{Associated forms of complete intersections}\label{S:prelim}

\subsection{Gorenstein Artin algebras and Macaulay inverse systems}\label{artinian-gorenstein}

We briefly recall basics of the theory of Macaulay inverse systems of graded Gorenstein 
Artin algebras necessary to state our main result, 
but the reader is encouraged to consult \cite{iarrobino-kanev} for 
a more comprehensive discussion. 

Recall that a homogeneous ideal $I\subset S$ is \emph{Gorenstein} 
if $\cA:=S/I$ is a \emph{Gorenstein Artin $k$-algebra}, 
meaning that $\dim_{k} \cA <\infty$ and $\dim_k \Soc(\cA)=1$.
Here, $\Soc(\cA)$ is the annihilator of the unique maximal ideal $\mi_{\A}$ of $\cA$. 
We endow $\cA$ with the standard grading coming from $S$.
Then 
\[
\cA=\bigoplus_{d=0}^{\nu} \cA_d,
\]
where $\nu$ is the \emph{socle degree} of $\cA$, and $\Soc(\cA)=\cA_{\nu}$. 
The surjection $H_\nu \co S_\nu \twoheadrightarrow \cA_\nu$
is called the $\nu^{th}$ \emph{Hilbert point} of $\cA$, which we regard 
as a point in $\PP S_\nu^{\vee}$.
As we will see shortly, it is dual to the homogeneous Macaulay inverse system of $\cA$. 

We can regard $S=\Sym V$ as a ring of polynomial differential operators
on a `dual ring' $D:=\Sym V^{\vee}$ as follows. Let $x_1,\dots, x_n$ be a basis of $V$ 
and $z_1,\dots, z_n$ be the dual basis of $V^{\vee}$.
Then we have an \emph{apolarity action} of $S$ on $D$ 
\[
\circ\colon S  \times D \to D \label{E:contraction-action}
\]
given by differentiation
\[
g(x_1,\dots,x_n)\circ f(z_1,\dots, z_n):=g(\partial /\partial z_1, \dots, \partial /\partial z_n) f(z_1,\dots,z_n).\label{E:contraction}
\]
Since $\chark(k)=0$, the restricted pairing $S_{d}  \times D_{d} \to k$
is perfect and so defines an isomorphism
\begin{equation}\label{E:duality}
D_{d} \simeq S_{d}^{\vee}
\end{equation}
(see \cite[Appendix A, Example A.5]{iarrobino-kanev} and \cite[Proposition 2.8]{jelisiejew} for more details).

Recall now the following classical result, whose modern 
exposition can be found in 
\cite[Lemmas 2.12 and 2.14]{iarrobino-kanev} or \cite[Exercise 21.7]{eisenbud}.
\begin{theorem}[{Macaulay's theorem \cite[Chapter IV]{macaulay}}]
\label{macaulay-theorem}
For every non-zero $f \in D_{\nu}$, the homogeneous ideal
\[
f^{\perp}:=\{g\in S \mid g\circ f=0\}
\] is such that $S/f^{\perp}$ is a Gorenstein Artin $k$-algebra of socle degree $\nu$. Conversely, for every homogeneous
Gorenstein ideal $I\subset S$ such that $S/I$ has socle
degree $\nu$, there exists $f\in D_{\nu}$ such that $I=f^{\perp}$.
Moreover, $f_1^{\perp}=f_2^{\perp}$ if and only if $f_1$ and $f_2$
are scalar multiples of each other.
\end{theorem}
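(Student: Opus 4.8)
The plan is to route everything through the cyclic $S$-submodule $S\circ f\subseteq D$ and to harvest the Gorenstein property from the perfect apolarity pairing \eqref{E:duality}; conceptually this is the graded Matlis duality for the Artinian ring $S$. The foundational point I would record first is that $D$ is a graded $S$-module under $\circ$: because partial derivatives commute, $g\circ(h\circ f)=(gh)\circ f$ for all $g,h\in S$, and a homogeneous operator of degree $e$ lowers degree in $D$ by $e$. Hence $f^{\perp}=\Ann_S(f)$ is a homogeneous ideal, and the evaluation map $g\mapsto g\circ f$ descends to a graded isomorphism $\cA:=S/f^{\perp}\xrightarrow{\sim} S\circ f$ carrying $\cA_d$ onto $(S\circ f)_{\nu-d}$.

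For the forward direction I would first note that $S\circ f\subseteq\bigoplus_{0\le d\le\nu}D_d$ is finite-dimensional, so $\cA$ is Artinian; moreover $\cA_\nu\cong (S\circ f)_0\subseteq D_0=k$ is nonzero (by \eqref{E:duality} some $g\in S_\nu$ has $g\circ f\ne 0$) while $\cA_d=0$ for $d>\nu$, so $\cA$ has socle degree $\nu$ and $\dim_k\cA_\nu=1$. To obtain the Gorenstein property I would show the multiplication pairing $\cA_d\times\cA_{\nu-d}\to\cA_\nu\cong k$ is non-degenerate on the left: if $g\in S_d$ represents a nonzero class then $g\circ f\ne 0$ in $D_{\nu-d}$, so perfectness of $S_{\nu-d}\times D_{\nu-d}\to k$ yields $h\in S_{\nu-d}$ with $(hg)\circ f=h\circ(g\circ f)\ne 0$, i.e. the class of $hg$ is nonzero in $\cA_\nu$. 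Since $\cA$ is generated in degree $1$, a homogeneous socle element of degree $d<\nu$ is killed by all of $\cA_{\nu-d}\subseteq\mi_{\A}$, contradicting non-degeneracy; thus $\Soc(\cA)=\cA_\nu$ is one-dimensional and $\cA$ is Gorenstein of socle degree $\nu$.

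For the converse, given a homogeneous Gorenstein $I$ with $S/I=:\cA$ of socle degree $\nu$, the identity $\Soc(\cA)=\cA_\nu$ gives $\dim_k\cA_\nu=1$, so $I_\nu$ has codimension $1$ in $S_\nu$ and, under \eqref{E:duality}, its annihilator $\Ann_{D_\nu}(I_\nu)$ is a line; I would let $0\ne f$ span it. I would then check $I\subseteq f^{\perp}$ degree by degree: for $g\in I_e$ with $e<\nu$ and any $h\in S_{\nu-e}$ one has $hg\in I_\nu$, hence $(hg)\circ f=0$, and perfectness of the pairing in degree $\nu-e$ forces $g\circ f=0$ (the cases $e=\nu$ and $e>\nu$ being immediate). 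This gives a graded surjection $\pi\colon\cA\twoheadrightarrow S/f^{\perp}=:\cA'$, where $\cA'$ is Gorenstein of socle degree $\nu$ with $\dim_k\cA'_\nu=1$ by the forward direction. Since $\pi$ restricts to a surjection of one-dimensional spaces $\cA_\nu\to\cA'_\nu$, it is an isomorphism there, hence injective on $\Soc(\cA)=\cA_\nu$; as any nonzero ideal of the Artinian algebra $\cA$ meets its socle, $\ker\pi=0$ and $I=f^{\perp}$. Finally $f_1^{\perp}=f_2^{\perp}=:I$ forces both $f_i$ to span the same line $\Ann_{D_\nu}(I_\nu)$, so they are proportional, the reverse implication being clear.

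The step I expect to be the main obstacle is precisely the converse's upgrade from the inclusion $I\subseteq f^{\perp}$ to equality. A naive Hilbert-function comparison is unavailable because the apolarity action \emph{lowers} degree, so the two ideals are not visibly of equal codimension in each graded piece; instead I would lean on the Gorenstein structure established in the forward direction, pushing the comparison to the one-dimensional top degree where injectivity on the socle forces $\pi$ to be an isomorphism. Everything ultimately rests on the perfectness of $S_d\times D_d\to k$ from \eqref{E:duality}, which is where the characteristic-zero hypothesis enters.
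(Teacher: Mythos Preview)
The paper does not supply its own proof of this theorem; it is stated as a classical result with references to \cite[Lemmas 2.12 and 2.14]{iarrobino-kanev} and \cite[Exercise 21.7]{eisenbud}, so there is no in-paper argument to compare against. Your proof is correct and is essentially the standard one found in those sources: you realize $S/f^\perp$ as the cyclic module $S\circ f$, read off the Gorenstein structure from the perfectness of the apolarity pairing, and for the converse you pick $f$ to span $\Ann_{D_\nu}(I_\nu)$ and upgrade the inclusion $I\subseteq f^\perp$ to equality via the socle argument (a nonzero kernel of the surjection $\cA\twoheadrightarrow\cA'$ would meet $\Soc(\cA)=\cA_\nu$, which maps isomorphically). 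The uniqueness clause is handled as expected. One small cosmetic point: when you say ``$\cA$ is generated in degree $1$'' you really mean $S$ is, hence $\mi_{\A}$ is generated by $\cA_1$; the statement as phrased is fine but could be read ambiguously.
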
 
\begin{definition}
If $I\subset S$ is a Gorenstein ideal and $\nu$ is the socle degree of the algebra $\cA=S/I$,
then a \emph{(homogeneous) Macaulay inverse system} of $\cA$ is an element 
$f \in D_{\nu}$, given by the above theorem, such that $I=f^{\perp}$. 
\end{definition}
Clearly, all Macaulay inverse systems are mutually proportional and 
the line $\langle f\rangle\in \PP D_{\nu}$
maps to the $\nu^{th}$ Hilbert point $H_{\nu}\in \PP S_{\nu}^{\vee}$ of $\cA$ under isomorphism \eqref{E:duality} for $d=\nu$. This leads to the following useful consequence
of Theorem \ref{macaulay-theorem}:
\begin{corollary}[{cf. \cite[Lemmas 2.15 and 2.17]{iarrobino-kanev}}]\label{L:gorenstein} Let $I$ and $J$ be homogeneous ideals
in $S$ such that $S/I$ and $S/J$ are Gorenstein Artin $k$-algebras of socle
degree $\nu$. Then:
\begin{enumerate}
\item $I_d=\{g \in S_d \mid hg \in I_{\nu} \text{ for all $h\in S_{\nu-d}$}\}$, for $1\leq d\leq \nu$.
\item $I=J$ if and only if $I_{\nu}=J_{\nu}$.
\end{enumerate}
\end{corollary}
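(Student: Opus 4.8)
The plan is to deduce both statements from Macaulay's theorem (Theorem \ref{macaulay-theorem}) via the apolarity action. Since $S/I$ is Gorenstein of socle degree $\nu$, write $I=f^\perp$ for a Macaulay inverse system $f\in D_\nu$. The crucial observation is that the apolarity action is compatible with multiplication in $S$: for $g\in S_d$ and $h\in S_{\nu-d}$ one has $(hg)\circ f=h\circ(g\circ f)$, since the map $S\to\{\text{polynomial differential operators}\}$ is a ring homomorphism and these operators commute. In particular, $hg\in I_\nu=(f^\perp)_\nu$ if and only if $h\circ(g\circ f)=0$.

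For part (1), denote the right-hand side by $I_d'$. The inclusion $I_d\subseteq I_d'$ is immediate from the ideal property. For the reverse inclusion, fix $g\in I_d'$ and set $\varphi:=g\circ f\in D_{\nu-d}$. The defining condition on $g$ translates, via the compatibility above, into $h\circ\varphi=0$ for every $h\in S_{\nu-d}$. Since the pairing $S_{\nu-d}\times D_{\nu-d}\to k$ is perfect (this is where $\chark(k)=0$ enters, cf. \eqref{E:duality}), this forces $\varphi=g\circ f=0$, i.e. $g\in(f^\perp)_d=I_d$. Hence $I_d=I_d'$, and the key point is that $I_d'$ is manifestly expressed in terms of $I_\nu$ alone.

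For part (2), the forward implication is trivial. For the converse, observe that by part (1) each graded piece $I_d$ with $1\leq d\leq\nu$ is determined by $I_\nu$; thus $I_\nu=J_\nu$ yields $I_d=J_d$ for $1\leq d\leq\nu$. The remaining degrees are automatic: $I_0=0=J_0$ since both algebras have degree-zero part equal to $k$, while $I_d=S_d=J_d$ for $d>\nu$ because both algebras have socle degree $\nu$ and so vanish in degrees exceeding $\nu$. Therefore $I_d=J_d$ in every degree, whence $I=J$.

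The argument is essentially formal once Macaulay's theorem is available, and I do not expect a genuine obstacle. The only step requiring care is the reduction in part (1) of the ideal-membership condition $hg\in I_\nu$ to the nondegeneracy of the apolarity pairing; this is precisely the manifestation of Gorenstein (Poincar\'e) duality for $S/I$, and it is what makes $I_\nu$ encode the entire ideal.
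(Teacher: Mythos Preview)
Your proof is correct and follows essentially the same approach as the paper. The paper's proof is a one-sentence sketch invoking the perfectness of the multiplication pairing $\cA_d\times\cA_{\nu-d}\to\cA_\nu$ as a consequence of Macaulay's theorem; your argument simply unpacks this by writing $I=f^\perp$ and translating the perfectness statement into the nondegeneracy of the apolarity pairing $S_{\nu-d}\times D_{\nu-d}\to k$, which is the same content.
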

\begin{proof} 
(1) implies (2) and follows immediately from Macaulay's theorem by noting that 
for $\cA=S/I$ the pairing $\cA_d \times \cA_{\nu-d} \to \cA_\nu$
is perfect for every $d\leq \nu$. 
\end{proof}

For any $\omega\in S_{\nu}^{\vee}$ with $\ker\omega=I_{\nu}$, papers \cite{eastwood-isaev1,eastwood-isaev2} introduced an \emph{associated form} of $\cA$ as the element of $D_{\nu}$ given by
the formula 
\begin{equation}\label{E:formula-associated}
f_{\cA,\omega}:=\omega\bigl((x_1z_1+\cdots+x_nz_n)^{\nu}\bigr) \in k[z_1,\dots,z_n]_{\nu}.
\end{equation}
Since $f_{\cA,\omega}^{\perp}=I$,
these associated forms give explicit formulae for the Macaulay inverse systems of $\cA$
(see \cite{isaev-criterion} for more details).

\subsection{Koszul complex}\label{S:koszul} Suppose $m$ is a positive integer.
Recall that for $g_1,\dots, g_{m} \in S_d$, the Koszul complex $K_{\bullet}(g_1,\dots,g_m)$ 
is defined as follows.  Let $e_{1}, \dots, e_m$ be the standard degree $d$ generators of the graded free 
$S$-module $S(-d)^{m}$. Then $K_{\bullet}(g_1,\dots, g_m)$ 
is an $(m+1)$-term 
complex of graded free $S$-modules with  
\[
K_{j}(g_1,\dots,g_m):=\wedge^{j} S(-d)^{m}\,\,\hbox{for $j=1,\dots,m$},  \quad K_{0}(g_1,\dots,g_m):=S,
\]
and the differential $d_j \colon K_{j}(g_1,\dots,g_m) \to K_{j-1}(g_1,\dots,g_m)$ given by
\[
d_j( e_{i_1}\wedge \cdots \wedge e_{i_j}):= \sum_{r=1}^{j} (-1)^{r-1} g_{i_r}  e_{i_1}\wedge \cdots\wedge \widehat{e_{i_r}}\wedge\cdots \wedge e_{i_j}.
\]
Note that $\HH_0(K_{\bullet}(g_1,\dots,g_m))=S/(g_1,\dots,g_m)$. We will use without further comment
basic results about Koszul complexes as developed in \cite[Chapter 17]{eisenbud}.

\subsection{Balanced complete intersections and their associated forms} 
\label{S:ci}
Suppose $d\geq 2$ and $m\leq n$. 
Recall that elements $g_1,\dots, g_m\in S_d$ form a regular sequence in $S$
if and only if any of the following equivalent conditions hold: 
\begin{enumerate}
\item $\codim (g_1,\dots,g_m) = m$.
\item the Koszul complex $K_{\bullet}(g_1,\dots,g_m)$  is a minimal free resolution of $S/(g_1,\dots,g_m)$.
\end{enumerate}
Moreover, if $n=m$, then the above conditions are also equivalent to each of
\begin{enumerate}
\item[(3)] the forms $g_1,\dots, g_n$ have no non-trivial common zero in $\bar k^n$.
\item[(4)] the resultant $\Res(g_1,\dots,g_n)$ is non-zero. 
\end{enumerate}

We now recall the definition of the associated form of a complete intersection as first given 
in \cite{alper-isaev-assoc-binary}.
To begin, 
if $g_1,\dots, g_n \in S_d$ form a regular sequence in $S$, then we call $I:=(g_1,\dots, g_n)$ 
a \emph{complete intersection ideal of type $(d)^n$}, or simply 
a \emph{balanced complete intersection} if the degree $d$ and the number of variables
$n$ are understood; here, \lq\lq balanced\rq\rq\ refers to the fact that $g_1,\dots, g_n$ have the same degree. In this case, we also call the algebra $\cA:=S/I$ a \emph{complete intersection algebra of type $(d)^n$}, or a \emph{balanced complete intersection}.
A complete intersection algebra of type $(d)^n$ is a graded 
Gorenstein Artin $k$-algebra with Hilbert
function \[
\sum_{j\geq 0} \dim_k (\cA_j) t^j=\left(\dfrac{1-t^{d}}{1-t}\right)^{n}
\] 
and so has socle degree $n(d-1)$. 

Let $H_{n(d-1)} \co S_{n(d-1)} \to \cA_{n(d-1)}$ be the $n(d-1)^{th}$ Hilbert point of $\cA$.
Denote by $\Jac(g_1,\dots,g_n)$ the Jacobian $n\times n$ matrix of $g_1,\dots, g_n$,
whose $(ij)^{th}$ entry is $\partial g_i/\partial x_j$.
Then $\cA_{n(d-1)}$
is spanned by $H_{n(d-1)}(\det \Jac(g_1,\dots,g_n))$ (see \cite[p.~187]{scheja-storch}),
and so we can choose an isomorphism $\cA_{n(d-1)} \simeq k$ that sends 
$H_{n(d-1)}(\det \Jac(g_1,\dots,g_n))$ to $1$.
Denote the resulting element of $S_{n(d-1)}^{\vee}=\Hom_{k}(S_{n(d-1)},k)$ by $\omega$. 
Then the form $f_{\cA, \omega} \in D_{n(d-1)}$ given by 
Equation \eqref{E:formula-associated} is called \emph{the associated form of $g_1,\dots, g_n$} 
and is denoted by $\AA(g_1,\dots,g_n)$ (cf. \cite{alper-isaev-assoc-binary}). 
The form $\AA(g_1,\dots,g_n)$ is a homogeneous Macaulay inverse system of $\cA$. We remark that the idea of considering this form goes back to \cite{isaev-kruzhilin}.

We let $\Grass(n,S_d)_{\Res}$ be the affine open subset of $\Grass(n,S_d)$ on which the resultant 
(considered as a section of the corresponding line bundle) 
does not vanish. Alper and Isaev defined the 
\emph{associated form morphism}
\[\label{E:assoc-morphism}
\AA\colon \Grass(n,S_d)_{\Res}\rightarrow \PP D_{n(d-1)}, 
\]
that sends a point $U\in \Grass(n,S_d)_{\Res}$ to 
the line spanned by $\AA(g_1,\dots,g_n)$, where $g_1,\dots, g_n$ is any basis of $U$ (see \cite[Section 2]{alper-isaev-assoc-binary}). By \cite[Lemma 2.7]{alper-isaev-assoc-binary}, the morphism $\AA$ is 
$\SL(n)$-equivariant. The preservation of GIT polystability by $\AA$ is the main object of study in this paper. Our main result is Theorem \ref{T:polystabilityintro}, which we restate as follows:

\begin{theorem}\label{T:polystability}
Suppose $U \in \Grass(n, S_d)_{\Res}$ is polystable. Then $\AA(U)$ is polystable. 
\end{theorem}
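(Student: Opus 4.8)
The plan is to use the Hilbert-Mumford numerical criterion to control polystability of $\AA(U)$ via one-parameter subgroups. Recall that $\AA$ is $\SL(n)$-equivariant, and the first author has already established that $\AA(U)$ is semistable whenever $U$ is spanned by a regular sequence. What remains is to upgrade semistability to polystability under the hypothesis that $U$ is itself polystable, i.e., that its $\SL(n)$-orbit in $\Grass(n,S_d)$ (equivalently, in the affine cone over the Pl\"ucker embedding of $\wedge^n S_d$) is closed. By the Kempf-Rousseau theory over a perfect field, it suffices to analyze the behavior of $\AA(U)$ along arbitrary one-parameter subgroups $\lambda\colon \mathbb{G}_m\to \SL(n)$ and to characterize exactly when the limit $\lim_{t\to 0}\lambda(t)\cdot \AA(U)$ stays inside the orbit rather than escaping to the boundary. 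The key principle I would exploit is that a semistable point is polystable if and only if, whenever a one-parameter subgroup $\lambda$ has Hilbert-Mumford weight exactly zero on the point (a \emph{destabilizing-to-the-boundary} candidate), the resulting limit already lies in the $\SL(n)$-orbit.

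The central technical step is to relate the weight filtration that $\lambda$ induces on $\AA(U)$ to the weight filtration it induces on $U$ itself. Fix a basis $x_1,\dots,x_n$ of $V$ diagonalizing $\lambda$ with weights $r_1\geq\cdots\geq r_n$ summing to zero. This induces a weight grading on $S_d$, on $\wedge^n S_d$, and on $D_{n(d-1)}$, and the equivariance of $\AA$ forces the weight of $\AA(U)$ under $\lambda$ to be determined by the weight of $U$ together with the intrinsic weight of the distinguished socle generator $\det\Jac(g_1,\dots,g_n)$ under $\lambda$, which is $n(d-1)\cdot\frac{1}{n}\sum r_i$ shifted appropriately by the duality between $S$ and $D$. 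First I would make this numerical bookkeeping precise: I expect a clean identity expressing $\mu(\AA(U),\lambda)$ in terms of $\mu(U,\lambda)$, using that the associated form is built from $\omega$ applied to $(x_1z_1+\cdots+x_nz_n)^{\nu}$ and that this mixed form is $\SL(n)$-invariant, so that the $\lambda$-action on the $z$-variables is dual to that on the $x$-variables. The upshot should be that a one-parameter subgroup computes weight zero on $\AA(U)$ precisely when it does so on $U$, pinning down the boundary-candidate $\lambda$'s to exactly those witnessing non-polystability of $U$.

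Granting this, I would argue by contradiction. Suppose $\AA(U)$ is semistable but not polystable. Then there is a one-parameter subgroup $\lambda$ with $\mu(\AA(U),\lambda)=0$ whose limit $W:=\lim_{t\to 0}\lambda(t)\cdot\AA(U)$ lies outside $\SL(n)\cdot\AA(U)$. By the weight identity, $\lambda$ also has $\mu(U,\lambda)=0$, so the limit $U_0:=\lim_{t\to 0}\lambda(t)\cdot U$ exists in $\Grass(n,S_d)$. Because $U$ is polystable, $U_0$ lies in $\SL(n)\cdot U$; after adjusting by an element of $\SL(n)$ we may assume $U_0=U$, i.e., $U$ is fixed by $\lambda$. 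Here is where the commutative-algebra Proposition of Section~\ref{S:algebra-prop} must enter: I expect it to guarantee that $U_0$ is again spanned by a regular sequence (so that $\AA(U_0)$ is defined and $\AA$ is continuous/functorial across this limit), and hence that $W=\AA(U_0)=\AA(U)$ up to the $\SL(n)$-action coming from $U_0\in\SL(n)\cdot U$. This forces $W\in\SL(n)\cdot\AA(U)$, contradicting our assumption and completing the proof.

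The hard part will be the continuity/commutation of $\AA$ with the limit operation at the boundary: a priori the limiting subspace $U_0$ might fail to be a regular sequence, in which case $\AA(U_0)$ is undefined and one cannot simply assert $\lim \lambda(t)\cdot\AA(U)=\AA(\lim\lambda(t)\cdot U)$. This is precisely the obstacle that the technical Proposition of Section~\ref{S:algebra-prop} is designed to overcome, presumably by showing that when the relevant Hilbert-Mumford weight vanishes, the initial degeneration $U_0=\init_\lambda(U)$ still has codimension $n$ (equivalently nonzero resultant), so it defines a balanced complete intersection and its associated form is the honest limit of $\AA(U)$. Establishing that the flat/weight degeneration of a regular sequence along a weight-zero one-parameter subgroup remains a regular sequence — and that the socle generator $\det\Jac$ degenerates compatibly so the associated form passes to the limit — is the crux on which the whole argument turns.
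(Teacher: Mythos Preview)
Your proposal diverges from the paper's argument and contains a genuine gap. The unjustified step is the ``weight identity'' asserting $\mu(\AA(U),\lambda)=0\Leftrightarrow\mu(U,\lambda)=0$. The Hilbert--Mumford index is the extremal $\lambda$-weight among the monomials actually occurring in a lift; equivariance of $\AA$ tells you how the group moves $\AA(U)$ but says nothing about \emph{which} monomials of $D_{n(d-1)}$ occur, beyond the combinatorial rule that $z_1^{d_1}\cdots z_n^{d_n}$ appears iff $x_1^{d_1}\cdots x_n^{d_n}\notin I_{n(d-1)}$. Your heuristic involving $\det\Jac$ and the invariance of $(x_1z_1+\cdots+x_nz_n)^{\nu}$ only explains the transformation law of $\AA$, not its weight spectrum, so it cannot produce the claimed equivalence. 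Without that equivalence you cannot conclude that $\lim_{t\to 0}\lambda(t)\cdot\hat U$ exists in the affine cone, which is what you need in order to invoke the polystability of $U$. Note, incidentally, that once $U_0\in\SL(n)\cdot U$ is known, the fact that $U_0$ is spanned by a regular sequence is automatic, so the role you reserve for Proposition~\ref{P:recognition} at that point is vacuous.

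You have also misread what Proposition~\ref{P:recognition} does: it is a recognition criterion for the \emph{original} ideal $I$ to be decomposable, not a statement that a weight-zero degeneration stays inside $\Grass(n,S_d)_{\Res}$. The paper does not follow the limit and appeal to continuity of $\AA$. It argues by induction on $n$ via a dichotomy. For decomposable polystable $U$, Corollary~\ref{C:decomposable} writes $U$ as a direct sum, Lemma~\ref{L:decomposable} factors $\AA(U)$ accordingly, and a Luna/Kempf centralizer argument (Lemma~\ref{L:luna}) assembles polystability from the inductive hypothesis on the factors. For indecomposable $U$, one supposes some $\rho$ makes $\lim_{t\to 0}\rho(t)\cdot\overline{\AA(U)}$ exist in the affine cone; the Grevlex Lemma~\ref{grevlex} together with Lemmas~\ref{L:weights}, \ref{L:complete}, and \ref{L:B-cond} then verify hypotheses (A) and (B) of Proposition~\ref{P:recognition}, forcing $U$ to be decomposable---a contradiction. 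Thus the paper actually obtains the stronger Theorem~\ref{A}: indecomposable $U$ have \emph{stable} associated form over $\bar k$, a conclusion your orbit-closure strategy would not recover even if the weight identity were available.
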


While proving Theorem \ref{T:polystability}, we also simplify the proof of the semistability of associated
forms, first obtained by the first author in \cite[Theorem 1.2]{fedorchuk-ss}. We refer
the reader to Theorem \ref{A} for a more technical version of Theorem \ref{T:polystability}
that gives a necessary and sufficient condition for $\AA(U)$ to be stable when $k$ is algebraically closed.

\subsection{Balanced complete intersections and decomposability}
Among all codimension $n$ ideals of $S$ generated in degree $d$,
the balanced complete intersections are distinguished using the following 
simple, but important lemma: 
\begin{lemma}\label{L:dimension-0}
Suppose $J \subset S$ is a codimension $n$ homogeneous ideal generated by $J_d$. 
Then either $J$ is a balanced complete intersection
or  
$
(S/J)_{n(d-1)}=0.
$
\end{lemma}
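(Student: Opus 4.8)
The plan is to build a balanced complete intersection $I\subseteq J$ sharing the socle degree $\nu:=n(d-1)$ of a complete intersection of type $(d)^n$, and then to use the Gorenstein duality recorded in Corollary~\ref{L:gorenstein} to conclude that either $(S/J)_{\nu}=0$ or $I=J$.

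First I would extract a regular sequence from $J_d$. Since $J=(J_d)$ has codimension $n$, Krull's height theorem forces $\dim_k J_d\geq n$ (otherwise a basis of $J_d$ would generate an ideal of height $<n$), and the common zero locus of $J_d$ in $\PP^{n-1}_{\bar k}$ is empty. Hence a general choice of $n$ forms $g_1,\dots,g_n\in J_d$ has no nontrivial common zero in $\bar k^{\,n}$; because $\chark(k)=0$ the field is infinite, so the non-vanishing locus of the resultant (a nonempty $k$-open subset of $(J_d)^n$) has a $k$-point, and such $g_1,\dots,g_n$ may be chosen over $k$. By conditions (3)--(4) of Section~\ref{S:ci}, $g_1,\dots,g_n$ form a regular sequence, so $I:=(g_1,\dots,g_n)\subseteq J$ is a balanced complete intersection; thus $S/I$ is Gorenstein of socle degree $\nu$ with $\dim_k(S/I)_{\nu}=1$.

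Next, the inclusion $I\subseteq J$ induces a graded surjection $S/I\twoheadrightarrow S/J$, so in degree $\nu$ we get $\dim_k(S/J)_{\nu}\leq\dim_k(S/I)_{\nu}=1$. If $(S/J)_{\nu}=0$, we are in the second alternative and done. Otherwise $(S/I)_{\nu}\to(S/J)_{\nu}$ is an isomorphism, i.e.\ $I_{\nu}=J_{\nu}$, and it remains to upgrade this single-degree equality to $I=J$. For $1\leq e\leq\nu$, Corollary~\ref{L:gorenstein}(1) applied to the Gorenstein ideal $I$ gives $I_e=\{g\in S_e : hg\in I_{\nu}\text{ for all }h\in S_{\nu-e}\}$. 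Given $g\in J_e$, since $J$ is an ideal we have $hg\in J_{\nu}=I_{\nu}$ for every $h\in S_{\nu-e}$, whence $g\in I_e$; together with $I\subseteq J$ this yields $I_e=J_e$. The remaining degrees are immediate: $I_0=J_0=0$, while for $e>\nu$ both $(S/I)_e$ and its quotient $(S/J)_e$ vanish, so $I_e=J_e=S_e$. Therefore $I=J$ is a balanced complete intersection.

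I expect the main obstacle to be twofold: producing the regular sequence inside $J_d$ over the (possibly non-closed) field $k$, which is the general-position step above, and correctly invoking Gorenstein duality. Note that one cannot apply Corollary~\ref{L:gorenstein}(2) directly, since $J$ is a priori only Artinian and not known to be Gorenstein; the argument must instead be routed through the Gorenstein ideal $I$ degree by degree as above.
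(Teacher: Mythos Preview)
Your proof is correct and follows essentially the same route as the paper: extract a regular sequence $g_1,\dots,g_n\in J_d$ to form a balanced complete intersection $I\subseteq J$, then use that $(S/I)_{n(d-1)}$ is one-dimensional together with Corollary~\ref{L:gorenstein} to force either $(S/J)_{n(d-1)}=0$ or $I=J$. Your presentation is in fact more careful than the paper's on two points: you justify the existence of the regular sequence over the possibly non-closed field $k$ via the resultant, and you correctly observe that part (2) of Corollary~\ref{L:gorenstein} does not apply directly (since $J$ is not known to be Gorenstein), routing the argument through part (1) instead.
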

\begin{proof}
Since $J$ has codimension $n$ in $S$, there exist $r_1,\dots, r_n \in J_d$
that form a regular sequence. Set $Y:=(r_1,\dots,r_n) \subset J$. Then $S/Y$ is a balanced 
complete intersection algebra with socle in degree $n(d-1)$. 
Let $H_{n(d-1)}$ be the $n(d-1)^{th}$ Hilbert point of $S/Y$. We have 
two possibilities:
\begin{enumerate}
\item either $H_{n(d-1)}((s)_{n(d-1)}) =0$ for every $s\in J_d$, in which case $J=Y$ by 
Corollary \ref{L:gorenstein},
\item or there exists $s\in J_d$ such that $H_{n(d-1)}((s)_{n(d-1)}) \neq 0$. 
\end{enumerate}
In the latter case, $(s)_{n(d-1)} \not\subset Y_{n(d-1)}$, so 
that $J_{n(d-1)}$ strictly contains $Y_{n(d-1)}$. Since $Y_{n(d-1)}$ is 
already of codimension $1$ in $S_{n(d-1)}$, we conclude that $J_{n(d-1)}=S_{n(d-1)}$ in this case. 
\end{proof}

\begin{definition}\label{D:decomposable}
We say that $U \in \Grass(n, S_d)_{\Res}$ is \emph{decomposable}
if there is a choice of a basis $x_1,\dots, x_n$ of $S_1$, an integer $1\leq a\leq n-1$, and a basis 
$g_1,\dots, g_n$ of $U$ such that 
$g_{a+1}, \dots, g_n \in k[x_{a+1},\dots, x_n]$. An element $U \in \Grass(n, S_d)_{\Res}$ that is not decomposable will be called \emph{indecomposable}. 
For $U \in \Grass(n, S_d)_{\Res}$, we will also speak about the (in)decomposability of the balanced complete intersection ideal $I:=(U)\subset S$ and the balanced complete intersection algebra $S/I$.
\end{definition}

Decomposable complete intersections have a simple structure described
by the following result:
\begin{prop}\label{P:decomposable} Suppose $U=\langle g_1,\dots, g_n\rangle \in \Grass(n, S_d)_{\Res}$ is
such that, for some $1\leq a\leq n-1$, we have $g_{a+1},\dots,g_n\in k[x_{a+1},\dots,x_n]_{d}$. Then $g_{a+1}, \dots, g_n$ is a regular 
sequence in $k[x_{a+1},\dots,x_n]_{d}$ 
and there exists a regular sequence $g_1', \dots, g_a' \in k[x_1,\dots,x_a]_d$ such that 
the closure of the $\SL(n)$-orbit of $U \in \Grass(n, S_d)_{\Res}$ contains 
$\langle g_1',\dots, g_a', g_{a+1},\dots,g_n\rangle$.
\end{prop}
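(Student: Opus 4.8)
The plan is to prove the two assertions in turn, obtaining the second by an explicit one-parameter degeneration whose limit I identify through a Gröbner-style computation. For the regular sequence claim, I would argue by codimension. Since $(g_1,\dots,g_n)\supseteq (g_{a+1},\dots,g_n)$ has codimension $n$ while the latter ideal is generated by $n-a$ elements, Krull's theorem gives $\codim(g_{a+1},\dots,g_n)\le n-a$, and adjoining $g_1,\dots,g_a$ raises the codimension by at most $a$, so $\codim(g_{a+1},\dots,g_n)= n-a$. Writing $R:=k[x_{a+1},\dots,x_n]$, the vanishing locus of $g_{a+1},\dots,g_n$ in $\bar k^{\,n}$ is a product $V\times \bar k^{\,a}$, where $V\subset \bar k^{\,n-a}$ is their common zero locus; comparing dimensions forces $\dim V=0$. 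Thus $g_{a+1},\dots,g_n$ are $n-a$ forms in $n-a$ variables with no nontrivial common zero, hence a regular sequence in $R$ by the equivalence recorded in Section~\ref{S:ci}.

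Next I would exhibit the candidate forms. For $1\le i\le a$, let $g_i'\in k[x_1,\dots,x_a]_d$ be the image of $g_i$ under $x_{a+1}=\dots=x_n=0$. I claim $g_1',\dots,g_a'$ is already a regular sequence in $k[x_1,\dots,x_a]$: any common zero of them in $\bar k^{\,a}$ yields, through the inclusion $\{x_{a+1}=\dots=x_n=0\}\hookrightarrow \bar k^{\,n}$, a common zero of all of $g_1,\dots,g_n$, since $g_{a+1},\dots,g_n$ vanish identically on this subspace while each $g_i$ restricts to $g_i'$. As $U\in\Grass(n,S_d)_{\Res}$, the only such common zero is the origin, so $g_1',\dots,g_a'$ have no nontrivial common zero and therefore form a regular sequence; in particular they are linearly independent.

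Finally I would realize $\langle g_1',\dots,g_a',g_{a+1},\dots,g_n\rangle$ inside the orbit closure. Consider the one-parameter subgroup $\lambda(t)=\mathrm{diag}\bigl(t^{-(n-a)},\dots,t^{-(n-a)},t^{a},\dots,t^{a}\bigr)\in\SL(n)$, which (for a suitable orientation) scales a degree $d$ monomial by $t$ to the power $np-(n-a)d$, where $p$ is its degree in $x_{a+1},\dots,x_n$; this weight is strictly increasing in $p$. Hence the lowest-weight part of $g_i$ is its $p=0$ component $g_i'$ for $i\le a$, and is $g_j$ itself for $j>a$ (these lie in the top piece $p=d$). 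Since these $n$ leading forms are linearly independent, the limit $\lim_{t\to 0}\lambda(t)\cdot U$ in the proper Grassmannian $\Grass(n,S_d)$ equals their span $\langle g_1',\dots,g_a',g_{a+1},\dots,g_n\rangle$, which therefore lies in the closure of $\SL(n)\cdot U$.

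I expect the main obstacle to be the identification of this Grassmannian limit with the span of leading forms, that is, ruling out any leakage of leading terms into the intermediate degrees $0<p<d$. This is controlled precisely by the linear independence of $g_1',\dots,g_a'$: an element of $U$ has vanishing $p=0$ component if and only if it lies in $\langle g_{a+1},\dots,g_n\rangle$, so no initial form can appear in an intermediate degree. Thus the genuine content of the argument is the earlier observation that the complete intersection hypothesis on $U$ already forces the pure parts $g_1',\dots,g_a'$ to be a regular sequence.
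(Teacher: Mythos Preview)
Your argument is correct and follows essentially the same route as the paper: the same restriction $g_i':=g_i(x_1,\dots,x_a,0,\dots,0)$, the same one-parameter subgroup with weights $-(n-a)$ and $a$, and the same identification of the limit. You simply supply justifications where the paper is terse (the paper declares the first statement ``clear'' and asserts the limit without comment), including the adapted-basis check that rules out intermediate leading terms.
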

\begin{proof} The first statement is clear. For the second statement, consider the elements $g_i':=g_i(x_1, \dots, x_a, 0,\dots, 0)$. 
Then $g_{1}',\dots,g_a'$
form a regular sequence in $k[x_{1},\dots, x_a]$.
Let $\mu$ be the 1-PS of $\SL(n)$ acting with weight $-(n-a)$ on $x_1,\dots, x_a$ and with weight $a$ on $x_{a+1},\dots, x_n$. Then 
\[
\lim_{t\to 0} \mu(t)\cdot U =\langle g'_1,\dots, g'_a, g_{a+1}, \dots, g_n\rangle \in \Grass(n, S_d)_{\Res}.
\]
This finishes the proof.
\end{proof}
The following is immediate:
\begin{corollary}\label{C:decomposable}  Suppose a decomposable $U \in \Grass(n, S_d)_{\Res}$ is polystable. Then 
there exists $1\leq a\leq n-1$ and a basis $x_1,\dots, x_n$ of $S_1$ such that 
\[
U=\langle g_1, \dots, g_a, g_{a+1},\dots, g_n\rangle,
\]
where $g_1,\dots, g_a$ is a regular sequence in $k[x_1,\dots,x_a]_d$ and $g_{a+1},\dots, g_n$
is a regular sequence in $k[x_{a+1},\dots,x_n]_d$. 
\end{corollary}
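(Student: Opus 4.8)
The plan is to extract from the definition of decomposability a concrete basis and splitting index, use Proposition \ref{P:decomposable} to produce a fully split point in the orbit closure, and then invoke the polystability hypothesis to pull this point back into the orbit itself.

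First I would unpack Definition \ref{D:decomposable}: since $U$ is decomposable, there exist a basis $x_1,\dots,x_n$ of $S_1$, an integer $1\leq a\leq n-1$, and a basis $g_1,\dots,g_n$ of $U$ with $g_{a+1},\dots,g_n\in k[x_{a+1},\dots,x_n]_d$. Proposition \ref{P:decomposable} then applies verbatim: it guarantees that $g_{a+1},\dots,g_n$ is already a regular sequence in $k[x_{a+1},\dots,x_n]_d$, and it produces a regular sequence $g_1',\dots,g_a'\in k[x_1,\dots,x_a]_d$ such that the fully split point
\[
U':=\langle g_1',\dots,g_a',g_{a+1},\dots,g_n\rangle
\]
lies in $\overline{\SL(n)\cdot U}$.

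The key step is to upgrade membership in the orbit closure to membership in the orbit, and this is exactly where polystability enters. Because $U$ is polystable, its orbit $\SL(n)\cdot U$ is closed, so $\overline{\SL(n)\cdot U}=\SL(n)\cdot U$ and hence $U'=h\cdot U$ for some $h\in\SL(n)$. Writing $U=h^{-1}\cdot U'$ and setting $x_i':=h^{-1}\cdot x_i$ gives a new basis $x_1',\dots,x_n'$ of $S_1$. Since $h^{-1}$ is an invertible linear change of variables, it carries $k[x_1,\dots,x_a]_d$ isomorphically onto $k[x_1',\dots,x_a']_d$ and $k[x_{a+1},\dots,x_n]_d$ onto $k[x_{a+1}',\dots,x_n']_d$, preserving the regular-sequence property in each block (an invertible substitution preserves codimension). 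Thus $U$ is spanned by $h^{-1}\cdot g_1',\dots,h^{-1}\cdot g_a'\in k[x_1',\dots,x_a']_d$ together with $h^{-1}\cdot g_{a+1},\dots,h^{-1}\cdot g_n\in k[x_{a+1}',\dots,x_n']_d$, each a regular sequence, and renaming $x_i'$ as $x_i$ yields the asserted decomposition.

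The only genuinely substantive point is the passage from the orbit closure to the orbit via the closed-orbit property of polystable points; everything else is bookkeeping about how a linear change of variables respects the block decomposition of the polynomial ring. I would expect no real obstacle beyond checking that the two blocks land in complementary coordinate subrings after applying $h^{-1}$, which holds because $h^{-1}$ maps the coordinate subspaces $\langle x_1,\dots,x_a\rangle$ and $\langle x_{a+1},\dots,x_n\rangle$ onto the spans of $x_1',\dots,x_a'$ and $x_{a+1}',\dots,x_n'$, respectively.
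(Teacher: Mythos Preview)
Your argument is correct and is precisely the unpacking of what the paper means by ``immediate'': apply Proposition \ref{P:decomposable} to obtain the fully split point $U'$ in the orbit closure, use the closedness of the orbit (polystability) to conclude $U'$ lies in the orbit, and transport the decomposition back to $U$ by the group element. The paper records no further details beyond this.
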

The balanced complete intersections described by the previous corollary are called \emph{direct sums}; we will see in Remark \ref{R:indecomposable-not-direct}
that non-polystable balanced complete intersections are always decomposable,
but are not direct sums of indecomposables.
The associated forms of direct sums are computed as follows. 
\begin{lemma}\label{L:decomposable} Suppose for $1\leq a\leq n-1$, we have that $g_1,\dots, g_a \in k[x_1,\dots,x_a]_{d}$
is a regular sequence and $g_{a+1},\dots,g_n\in k[x_{a+1},\dots,x_n]_{d}$ is a regular sequence.
Then 
\[
\AA(g_1,\dots,g_n)=\binom{n(d-1)}{a(d-1)}
\AA(g_1,\dots,g_a)\AA(g_{a+1},\dots,g_n),
\]
where $\AA(g_1,\dots,g_a)$ is the associated form of $g_1,\dots, g_a$ in 
$k[z_1,\dots,z_a]_{a(d-1)}$, and $\AA(g_{a+1},\dots,g_n)$ is that of 
$g_{a+1},\dots, g_a$ in $k[z_{a+1},\dots,z_n]_{(n-a)(d-1)}$.
\end{lemma}
\begin{proof}
As an element of $S_{n(d-1)}^{\vee}$ under the isomorphism \eqref{E:duality}, 
$\AA(g_1,\dots,g_n)$
is uniquely determined by the property that it vanishes on $(g_1,\dots,g_n)_{n(d-1)}$
and satisfies $$\AA(g_1,\dots,g_n) \bigl(\det \Jac(g_1,\dots,g_n)\bigr) = (n(d-1))!.$$
The claim now follows from
$$
\det \Jac(g_1,\dots,g_n) = \det \Jac(g_1,\dots,g_a) \det \Jac(g_{a+1},\dots,g_n).
$$
\end{proof}

\section{Recognition criterion for decomposable balanced complete intersections}
\label{S:algebra-prop}

Decomposable balanced complete intersections play a crucial role in our inductive 
proof of polystability of associated forms. In this section, we obtain a criterion
for a balanced complete intersection ideal to be decomposable based only on partial 
information about the ideal. Although technical, this result may be of independent interest;
in fact, it has already been used by the first author to give a new 
criterion for forms defining smooth hypersurfaces 
to be of Sebastiani-Thom type \cite{fedorchuk-direct}. 
We note that the results of this section are
valid over an arbitrary field $k$, with no restriction on its characteristic. 
\begin{prop} \label{P:recognition}
 Let $1\le b\le n-1$ and suppose that $I \subset k[x_1,\dots,x_n]$ is a complete intersection
ideal of type $(d)^n$ such that 
\begin{enumerate}
\item[(A)]\label{C1} the homomorphic image of $I$ in $k[x_1,\dots,x_n]/(x_{b+1}, \dots, x_n) \simeq\linebreak k[x_1,\dots,x_b]$
 is a balanced complete intersection ideal, equivalently, \[\dim_k \bigl( I_d \cap (x_{b+1}, \dots, x_n)\bigr)=n-b,\]
\item[(B)]\label{C2} $(x_{b+1}, \dots,x_n)^{(n-b)(d-1)+1} \subset I$. 
\end{enumerate}
Then there are $n-b$ linearly independent elements
$g_{b+1}, \dots, g_n$ in the intersection $I_d \cap k[x_{b+1},\dots, x_n]$; in particular, $I$ is decomposable. 
 \end{prop}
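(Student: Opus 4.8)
The plan is to pass to Macaulay inverse systems and to prove the sharper statement that $I_d$ already contains $c:=n-b$ forms in the variables $x_{b+1},\dots,x_n$ alone. Write $\cA:=S/I$, a Gorenstein Artin algebra of socle degree $\nu=n(d-1)=p+q$ with $p:=b(d-1)$ and $q:=c(d-1)$; set $\mathfrak n:=(x_{b+1},\dots,x_n)$, $\bar{\mathfrak n}:=\mathfrak n\cA$, and $B:=k[x_{b+1},\dots,x_n]$. Hypothesis (A) says exactly that $C:=\cA/\bar{\mathfrak n}=S/(I+\mathfrak n)$ is a balanced complete intersection of type $(d)^b$, of socle degree $p$. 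Since $B_d\subseteq\mathfrak n_d$ and $\dim_k(I_d\cap\mathfrak n_d)=c$ by (A), we have $\dim_k(I_d\cap B_d)\le c$, so it suffices to prove $\dim_k(I_d\cap B_d)\ge c$; writing $\bar B:=k[\bar x_{b+1},\dots,\bar x_n]\subseteq\cA$ and $K:=I\cap B$, this is the same as $\dim_k\bar B_d\le\dim_k B_d-c$.

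The mechanism I would exploit is a factorization of the leading form of the inverse system. Let $F\in D_\nu$ satisfy $I=F^\perp$, and decompose $F=\sum_j F_j$ by degree $j$ in the dual variables $z_{b+1},\dots,z_n$. Hypothesis (B), $\mathfrak n^{q+1}\subseteq I$, is equivalent to $F_j=0$ for $j>q$. On the other hand $(S\circ F)\cap k[z_1,\dots,z_b]=(I+\mathfrak n)^\perp$ is the inverse system of $C$, which by (A) lives in degrees $\le p$ with one-dimensional top piece $\langle\bar F\rangle$. Differentiating $F$ by the $z_{b+1},\dots,z_n$ to order $q$ produces purely $z_{\le b}$ forms of degree $p$; this forces $F_q\ne0$ (otherwise a nonzero purely $z_{\le b}$ derivative of degree $>p$ would appear) and makes every such top coefficient a multiple of $\bar F$. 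Hence
\[
F_q=\bar F(z_1,\dots,z_b)\cdot G(z_{b+1},\dots,z_n),\qquad 0\ne G\in k[z_{b+1},\dots,z_n]_q .
\]

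Next I would bound the Hilbert function $H_{\bar B}$ from two sides. If $h\in K$ then $h\circ F=0$, so its leading part $\bar F\,(h\circ G)$ vanishes and $h\circ G=0$; thus $K\subseteq G^\perp$, giving a surjection $\bar B\twoheadrightarrow\bar G:=k[z_{b+1},\dots,z_n]^\vee\!/G^\perp$. Here $\bar G$ is Gorenstein of socle degree $q$ by Macaulay's theorem, and $\bar B$ is too: $\Soc(\bar B)=\bar B\cap\Ann_\cA(\bar{\mathfrak n})$, while Gorenstein duality in $\cA$ gives $\dim_k\Ann_\cA(\bar{\mathfrak n})_j=\dim_k C_{\nu-j}$, which vanishes for $j<q$; together with $\bar B_{>q}=0$ from (B) this pins $\Soc(\bar B)=\bar B_q$, necessarily one-dimensional. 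Since a surjection of Gorenstein Artin algebras of equal socle degree is an isomorphism (a nonzero kernel would contain the one-dimensional socle, which maps isomorphically), we get $\bar B\cong\bar G$.

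Finally I would degenerate $\langle F\rangle$ by the one-parameter subgroup scaling $z_{b+1},\dots,z_n$: its limit is $\langle F_q\rangle=\langle\bar F\cdot G\rangle$, and by Lemma~\ref{L:decomposable} the apolar algebra of the product $\bar F\cdot G$ in disjoint variables is the direct sum $C\otimes_k\bar G$, of Hilbert series $H_C\cdot H_{\bar G}$. As the Hilbert function of an apolar algebra is lower semicontinuous (the ranks of the contraction maps $S_j\to D_{\nu-j}$ only drop in the limit), this yields $H_C\cdot H_{\bar G}\le H_\cA$ coefficientwise; combined with $H_\cA\le H_C\cdot H_{\bar B}$ from the $\bar{\mathfrak n}$-adic filtration (each $\bar{\mathfrak n}^i/\bar{\mathfrak n}^{i+1}$ is generated over $C$ by $\bar B_i$) and with $\bar B\cong\bar G$, these force $H_{\bar B}=(1+t+\cdots+t^{d-1})^c$, whose degree-$d$ coefficient gives $\dim_k\bar B_d=\dim_k B_d-c$, i.e. $\dim_k K_d=c$. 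The hard part is precisely this last, nontrivial direction: ruling out that lower-order relations make $\bar B$ strictly larger than the complete intersection of type $(d)^c$. This is exactly what the degeneration controls, and it is where both hypotheses enter decisively — (A) to make the leading form factor through the genuine inverse system $\bar F$ of $C$, and (B) to cut the $z_{>b}$-degree of $F$ down to $q$ so that $G$ has the correct degree $q$.
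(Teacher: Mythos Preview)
Your proof is correct and follows a genuinely different route from the paper's. The paper works on the primal side: starting from generators $g_{b+1},\dots,g_n\in I_d\cap\mathfrak n$ guaranteed by (A), it compares the linear minimal free resolution of $\mathfrak n^{q+1}$ (an Eagon--Northcott complex) against the Koszul complex $K_\bullet(g_1,\dots,g_n)$, and via a null-homotopy argument in homological degree $n-b$ shows that in fact $\mathfrak n^{q+1}\subset(g_{b+1},\dots,g_n)$; a short explicit comparison of two auxiliary complete intersections then forces $g_{b+1},\dots,g_n\in B$. Your argument lives entirely on the dual side: you extract the factorization $F_q=\bar F\cdot G$ of the top $z_{>b}$-graded piece of the inverse system, use Gorenstein duality in $\cA$ to see that the image $\bar B$ of $B$ in $\cA$ is itself Gorenstein of socle degree $q$, identify it with $B/G^\perp$ via a socle argument, and then pin down $H_{\bar B}$ by sandwiching $H_\cA$ between $H_C\cdot H_{\bar G}$ (semicontinuity of catalecticant ranks under the degeneration $F\rightsquigarrow F_q$) and $H_C\cdot H_{\bar B}$ (the $\bar{\mathfrak n}$-adic filtration). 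The paper's method is characteristic-free and identifies the actual generators; yours is more structural, yields the sharper conclusion $I\cap B=G^\perp$ with Hilbert series exactly $((1-t^d)/(1-t))^{n-b}$, but as written relies on the characteristic-zero apolarity pairing. One minor quibble: your appeal to Lemma~\ref{L:decomposable} is slightly off target --- that lemma computes associated forms of direct-sum complete intersections, whereas what you actually use is the easier and more general fact that for nonzero forms $P,Q$ in disjoint sets of variables the apolar algebra of $PQ$ is the tensor product of the individual apolar algebras, so that Hilbert series multiply.
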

 
 \begin{remark} Note that conditions (A) and (B) are necessary for the conclusion to hold. 
 Indeed, if $g_{b+1}, \dots, g_n \in I_d \cap k[x_{b+1},\dots, x_n]$ is a regular sequence, then 
 $I_d \cap (x_{b+1}, \dots, x_n)=(g_{b+1}, \dots, g_n)$, and $(g_{b+1}, \dots, g_n)$ is a balanced
 complete intersection ideal in $k[x_{b+1},\dots, x_n]$ and so contains $(x_{b+1}, \dots,x_n)^{(n-b)(d-1)+1}$.
 The difficulty lies in verifying the sufficiency of these conditions.
 \end{remark}
 
 \begin{proof}[Proof of  Proposition \ref{P:recognition}]
  Note that for $b=n-1$, Proposition \ref{P:recognition} is obvious, because in this case $x_n^{d} \in I_d$ by (B). 
  So we assume that $b\leq n-2$ in what follows. 
  
  Using condition (A), we can find a basis $g_1,\dots, g_n$ of $I_d$ satisfying $g_{b+1},\dots, g_n \in (x_{b+1},\dots, x_n)$. 
 We will prove that in fact $g_{b+1},\dots, g_n \in k[x_{b+1},\dots, x_n]$.
 We separate our argument into 
two parts, given by two key Lemmas \ref{alg-claim-1} and \ref{alg-claim-2}.

\begin{lemma}\label{alg-claim-1} 
Suppose $I=(g_1,\dots, g_n)$ is a complete intersection ideal of type $(d)^n$ such that 
$g_{b+1},\dots, g_n \in (x_{b+1},\dots, x_n)$ and condition (B) of Proposition \ref{P:recognition} holds. Then 
\[
(x_{b+1}, \dots,x_n)^{(n-b)(d-1)+1}
 \subset (g_{b+1}, \dots, g_n).
\]
\end{lemma}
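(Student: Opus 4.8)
The plan is to pass to the quotient $B:=S/(g_{b+1},\dots,g_n)$ and to turn the desired containment into the vanishing of a power of a single prime. Write $m:=n-b$, $\mathfrak p:=(x_{b+1},\dots,x_n)$, and $N:=m(d-1)+1$, so that the assertion is $\mathfrak p^{N}\subseteq (g_{b+1},\dots,g_n)$, i.e. $\bar{\mathfrak p}^{\,N}=0$ in $B$, where $\bar{\mathfrak p}:=\mathfrak p B$. Since $g_{b+1},\dots,g_n$ is a subsequence of a regular sequence it is itself regular, so $B$ is a graded complete intersection, hence Cohen--Macaulay and Gorenstein of dimension $b$. Because $g_{b+1},\dots,g_n\in\mathfrak p$ we have $B/\bar{\mathfrak p}=S/\mathfrak p=k[x_1,\dots,x_b]$, a domain of dimension $b=\dim B$; thus $\bar{\mathfrak p}$ is a minimal prime of $B$ of maximal dimension, and $\bar{\mathfrak p}^{\,N}=0$ is exactly what must be proved.

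First I would extract the precise content of hypothesis (B). Apolarity makes it transparent: if $F$ is a Macaulay inverse system of $\cA=S/I$, $I=(g_1,\dots,g_n)$ (Theorem \ref{macaulay-theorem}), and $z_1,\dots,z_n$ are the dual coordinates, then $\mathfrak p^{N}\subseteq I=F^{\perp}$ says exactly that every order-$N$ partial derivative of $F$ in $z_{b+1},\dots,z_n$ vanishes, i.e. $\deg_{z_{b+1},\dots,z_n}F\le m(d-1)$. On the ring side, writing $\bar g_i$ for the image of $g_i$ in $B$, I claim the identity $\bar{\mathfrak p}\cap(\bar g_1,\dots,\bar g_b)B=(\bar g_1,\dots,\bar g_b)\bar{\mathfrak p}$. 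Granting it and intersecting the inclusion $\bar{\mathfrak p}^{\,N}\subseteq (\bar g_1,\dots,\bar g_b)B$ (which is (B) rewritten, since $\bar I=(\bar g_1,\dots,\bar g_b)B$) with $\bar{\mathfrak p}$ sharpens (B) to
\[
\bar{\mathfrak p}^{\,N}\subseteq (\bar g_1,\dots,\bar g_b)\,\bar{\mathfrak p}.
\]
The claimed identity is the vanishing of $\mathrm{Tor}^{B}_{1}(B/\bar{\mathfrak p},\cA)$, and it holds because $\bar g_1,\dots,\bar g_b$ is a regular sequence both on $B$ and on $B/\bar{\mathfrak p}=k[x_1,\dots,x_b]$: on the latter they specialize to $g_i(x_1,\dots,x_b,0,\dots,0)$, whose only common zero is the origin, since any common zero $(v,0)$ would, as $g_{b+1},\dots,g_n\in\mathfrak p$ vanish there, be a common zero of all of $g_1,\dots,g_n$.

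The structural finish rests on a Noether normalization. As $\bar g_1,\dots,\bar g_b$ is a homogeneous system of parameters for the Cohen--Macaulay ring $B$, the ring $B$ is a finite free graded module over the polynomial subring $R:=k[\bar g_1,\dots,\bar g_b]$, and $(\bar g_1,\dots,\bar g_b)\bar{\mathfrak p}=\mathfrak m_{R}\,\bar{\mathfrak p}$. In particular $\bar{\mathfrak p}\subseteq B$ is torsion free over $R$, so $\bar{\mathfrak p}^{\,N}=0$ is equivalent to $\operatorname{rank}_{R}\bar{\mathfrak p}^{\,N}=0$, i.e. to the vanishing of $(\mathfrak p\,\B)^{N}$ in the generic fibre $\B:=B\otimes_{R}\operatorname{Frac}(R)$. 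Equivalently, I must show that $\bar{\mathfrak p}$ is the \emph{only} minimal prime of $B$ (so that $\B$ is local) and that the nilpotency index there is at most $N$; by the associativity formula for multiplicities this is the length identity $\ell(B_{\bar{\mathfrak p}})=d^{m}$, the full intersection multiplicity of $g_{b+1},\dots,g_n$ along $\{x_{b+1}=\dots=x_n=0\}$.

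I expect this last point to be the crux. The difficulty is that hypothesis (B) is invisible to localization: at every point of $\spec R$ one of the $\bar g_i$ is a unit, so the displayed inclusion becomes vacuous there, and no amount of localizing or iterating $\bar{\mathfrak p}^{\,N}\subseteq\mathfrak m_R\bar{\mathfrak p}$ will detect a spurious minimal prime. The information must therefore be taken from the global graded datum $\deg_{z_{b+1},\dots,z_n}F\le m(d-1)$. Concretely, I would pass to the leading forms of $g_{b+1},\dots,g_n$ for the weight assigning weight $0$ to $x_1,\dots,x_b$ and weight $1$ to $x_{b+1},\dots,x_n$, and argue that the top bidegree part of $F$ (of bidegree $(b(d-1),m(d-1))$) forces these leading forms to cut out $\{x_{b+1}=\dots=x_n=0\}$ with multiplicity exactly $d^{m}$; this simultaneously excludes minimal primes other than $\bar{\mathfrak p}$ and pins the socle degree of the associated graded of $B_{\bar{\mathfrak p}}$ at $m(d-1)$, yielding $\bar{\mathfrak p}^{\,N}=0$. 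Establishing this compatibility between the inverse system of $\cA$ and the weighted leading structure of $B$ is the one step I would have to carry out by hand.
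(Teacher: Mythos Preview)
Your framework is reasonable and the preliminary reductions are sound, but the proposal has a genuine gap: the step you yourself call the crux is not carried out, and the sketch you offer for it is not clearly viable. You correctly reduce the lemma to showing $\bar{\mathfrak p}^{\,N}=0$ in $B=S/(g_{b+1},\dots,g_n)$, and your Tor-vanishing argument does yield the sharpening $\bar{\mathfrak p}^{\,N}\subseteq \mathfrak m_R\,\bar{\mathfrak p}$. But this inclusion alone cannot force $\bar{\mathfrak p}^{\,N}=0$; iterating it only gives $\bar{\mathfrak p}^{\,k(N-1)+1}\subseteq\mathfrak m_R^{\,k}\,\bar{\mathfrak p}$, which is a growth condition, not a vanishing statement. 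What remains, as you say, is to show that $\bar{\mathfrak p}$ is the \emph{unique} minimal prime of $B$ and that $\ell(B_{\bar{\mathfrak p}})=d^{m}$. Your proposal to extract this from the bidegree of the inverse system $F$ via a weighted initial degeneration is only gestured at, and I do not see a concrete mechanism by which the bound $\deg_{z_{b+1},\dots,z_n}F\le m(d-1)$ excludes extra minimal primes of $B$ or pins down the length of $B_{\bar{\mathfrak p}}$. Hypothesis~(B) is a statement about $I$, not about $(g_{b+1},\dots,g_n)$, and bridging that gap is precisely the content of the lemma; your outline does not yet supply the bridge.

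The paper's proof is entirely different and avoids the geometry of $\spec B$ altogether. It compares free resolutions: the Eagon--Northcott (linear) minimal resolution of $\mathfrak p^N$ has length $n-b$, and the numerical coincidence $N+(n-b)-1=(n-b)d$ forces the comparison map to the Koszul complex $K_\bullet(g_1,\dots,g_n)$ in homological degree $n-b$ to be a matrix of scalars. Reducing modulo $\mathfrak p$ and using only that the images $\bar g_1,\dots,\bar g_b$ in $k[x_1,\dots,x_b]$ are linearly independent, one checks that this scalar map lands in the subcomplex $K_{n-b}(g_{b+1},\dots,g_n)$. A null-homotopy argument (dualizing and using $\operatorname{Ext}^j_S(S/\mathfrak p^N,S)=0$ for $j<n-b$) then propagates this containment down to homological degree $0$, yielding $\mathfrak p^N\subseteq(g_{b+1},\dots,g_n)$ directly, without ever needing to control the minimal primes of $B$.
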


\begin{remark} The idea behind our proof of this lemma is to understand 
all the syzygy modules of the ideals $(x_{b+1}, \dots,x_n)^{(n-b)(d-1)+1}$ and
$(g_1,\dots, g_n)$. Comparing syzygies of a certain order then gives the requisite statement.
We encourage the reader to keep in mind 
the first non-trivial case given by a regular sequence 
$g_1, g_2, g_3\in k[x_1,x_2,x_3]_d$ such that $g_2, g_3\in (x_2,x_3)$ and 
$(x_{2}, x_3)^{2d-1} \subset (g_1,g_2,g_3)$. The lemma asserts in this case that 
in fact $(x_{2}, x_3)^{2d-1}\subset (g_2,g_3)$.  \end{remark}

\begin{proof}[Proof of Lemma \ref{alg-claim-1}]
Set $N:=(n-b)(d-1)+1$,
and $J:=(x_{b+1},\dots, x_n)^{N}$. 
Let $\tilde{J}:=J\cap k[x_{b+1},\dots, x_n]$ and $R:=k[x_{b+1},\dots, x_n]$.
Then $\tilde{J}$
is the $N^{th}$ power of the irrelevant ideal in $R$ and so has regularity
$N$, for example, by the computation of its local cohomology
(cf.~\cite[Lemma 1.7]{bayer-stillman}).
Hence $\tilde J$ has a \emph{linear} minimal free
resolution as an $R$-module. In fact, as explained in
\cite[pp.~269-270]{buchsbaum-eisenbud}, 
an explicit minimal free resolution of $\tilde J$ was constructed by Buchsbaum and Rim
using the Eagon-Northcott complex \cite{buchsbaum-rim}. It follows that the
minimal free resolution of $\tilde J$ has the following form:
\begin{equation}\label{res-1}
\begin{array}{l}
0 \to R^{\ell_{n-b}}(-N-n+b+1) \to \cdots\\
\hspace{3cm} \cdots \to R^{\ell_2}(-N-1)\to R^{\ell_1}(-N) \to \tilde{J} \to 0.
\end{array}
\end{equation}
Since $S$ is a flat $R$-algebra, tensoring by $S$ we obtain a minimal
free resolution of $J$ as an $S$-module:
\begin{equation*}
0 \to S^{\ell_{n-b}}(-N-n+b+1) \to\cdots 
\to S^{\ell_2}(-N-1)\to S^{\ell_1}(-N) \to J \to 0.
\end{equation*}

Consider now the Koszul complex $K_{\bullet}(g_1,\dots,g_n)$,
which gives a minimal free resolution of $S/I$; we keep the notation of \S\ref{S:koszul}.
By our assumption, we have an inclusion $J \subset I$. It gives rise to a map of complexes
\begin{equation}\label{complex-map1}
\makebox[250pt]{$\begin{gathered}
\xymatrix{
S^{\ell_{n-b}}(-N-n+b+1)  \ar[r]\ar[d]^{m_{n-b}} & \cdots \ar[r]\ar[d]
&
S^{\ell_1}(-N)
\ar[r]\ar[d]^{m_{1}}  & S \ar[d]^{m_0=\operatorname{Id}_S} \\
K_{n-b}(g_1,\dots,g_n)  \ar[r] & \cdots \ar[r] &
K_{1}(g_1,\dots,g_n)  \ar[r]  & S.
}
\end{gathered}$}
\end{equation}

Next, note that the Koszul complex $K_{\bullet} (g_1,\dots,g_n)$ contains
$K_{\bullet}(g_{b+1},\dots,g_n)$ as a subcomplex. Let $Q_{\bullet}$ be the quotient
complex. Then
\[
Q_{i}:=K_{i}(g_1,\dots,g_n)/K_{i}(g_{b+1},\dots,g_n)\]
is a free $S$-module for every $i\geq 1$, and from the long exact sequence
in homology associated to the short exact sequence of complexes
\[
0 \to K_{\bullet}(g_{b+1},\dots,g_n) \to K_{\bullet} (g_1,\dots,g_n) \to Q_{\bullet} \to 0,
\]
we obtain that
$Q_{0}=0$, $\HH_1(Q_{\bullet})=I/(g_{b+1},\dots,g_n) \subset S/(g_{b+1},\dots,g_n)$, and
$\HH_{i}(Q_{\bullet})=0$ for $i>1$.

Composing \eqref{complex-map1} with the quotient morphism, 
and replacing $Q_0$ by the quotient $S/(g_{b+1},\dots,g_n)$, we obtain
a map of exact complexes 
\begin{equation}\label{complex-map-2}
\makebox[250pt]{$\begin{gathered}
\xymatrix{
S^{\ell_{n-b}}(-N-n+b+1)  \ar[r]\ar[d]^{\tilde m_{n-b}} & \cdots
\ar[r]\ar[d] &  
S^{\ell_1}(-N)
\ar[r]\ar[d]^{\tilde m_{1}}  & S \ar[d]^{\tilde m_0} \\
Q_{n-b} \ar[r] & \cdots \ar[r] & 
Q_{1} \ar[r]^{d_1 \qquad } & S/(g_{b+1},\dots,g_n).
}
\end{gathered}$}
\end{equation}
Note that $J\subset (g_{b+1},\dots,g_n)$ if and only if 
\[
\tilde m_0(J)
=\operatorname{Im}(d_1 \circ \tilde m_1)=0,
\] which is
what we are going to prove. We begin with the following: 
\begin{claim}\label{zero-map}  $\tilde m_{n-b}=0$,
or, equivalently, $\operatorname{Im}(m_{n-b})\subset K_{n-b}(g_{b+1},\dots,g_n)$.
\end{claim}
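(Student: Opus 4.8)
The plan is to deduce the claim from the single leftmost commutative square in the top two rows of \eqref{complex-map1}, reducing it to an elementary statement about the constant matrix representing $m_{n-b}$. First I would do the degree bookkeeping. Since $N=(n-b)(d-1)+1$, the generators of $S^{\ell_{n-b}}(-N-n+b+1)$ sit in internal degree $N+(n-b)-1=(n-b)d$, which is exactly the degree of the generators $e_{i_1}\wedge\cdots\wedge e_{i_{n-b}}$ of $K_{n-b}(g_1,\dots,g_n)$. Hence $m_{n-b}$ is represented by a matrix of scalars, and for a generator $v$ of the source I may write $m_{n-b}(v)=\sum_{|T|=n-b}c_T\,e_T$ with $c_T\in k$. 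As $K_{n-b}(g_{b+1},\dots,g_n)$ is the rank-one free module spanned by $e_{b+1}\wedge\cdots\wedge e_n$, the claim is precisely that $c_T=0$ for every $T\neq\{b+1,\dots,n\}$, i.e. for every $T$ meeting $\{1,\dots,b\}$.

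The second ingredient is that the resolution \eqref{res-1} of $\tilde J$ lives over $R=k[x_{b+1},\dots,x_n]$ and is linear, so after the flat base change $R\to S$ all its differentials have entries that are linear forms in $x_{b+1},\dots,x_n$ only. I would then use the square $d_{n-b}\circ m_{n-b}=m_{n-b-1}\circ\varphi$, where $\varphi$ is the leftmost differential in the top row of \eqref{complex-map1}. Expanding the Koszul differential, $d_{n-b}(m_{n-b}(v))=\sum_{|S|=n-b-1}\bigl(\sum_{i\notin S}\pm c_{S\cup i}\,g_i\bigr)e_S$, whereas $\varphi(v)\in(x_{b+1},\dots,x_n)\cdot S^{\ell_{n-b-1}}(-N-n+b+2)$, so every $e_S$-coefficient of $m_{n-b-1}(\varphi(v))$ lies in $(x_{b+1},\dots,x_n)$. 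Comparing coefficients yields $\sum_{i\notin S}\pm c_{S\cup i}\,g_i\in(x_{b+1},\dots,x_n)$ for each $S$ with $|S|=n-b-1$.

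Finally I would reduce this relation modulo $(x_{b+1},\dots,x_n)$ and pass to $k[x_1,\dots,x_b]$. Writing $\bar g_i$ for the image of $g_i$, the hypothesis $g_{b+1},\dots,g_n\in(x_{b+1},\dots,x_n)$ forces $\bar g_{b+1}=\cdots=\bar g_n=0$, so the relation collapses to $\sum_{i\le b,\,i\notin S}\pm c_{S\cup i}\,\bar g_i=0$. Here the key point is that $\bar g_1,\dots,\bar g_b$ form a regular sequence: slicing the common zero locus of $I$ by $\{x_{b+1}=\cdots=x_n=0\}$ shows that $\bar g_1,\dots,\bar g_b$ have no nontrivial common zero in $\bar k^b$ (since $g_1,\dots,g_n$ have none in $\bar k^n$ and $g_{b+1},\dots,g_n$ vanish on the slice), hence they are a homogeneous system of parameters in $k[x_1,\dots,x_b]$ and in particular $k$-linearly independent. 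Linear independence then kills every coefficient, giving $c_{S\cup i}=0$ for all $i\le b$ and all such $S$, i.e. $c_T=0$ for every $T$ meeting $\{1,\dots,b\}$; this is exactly the claim. The main obstacle is the first, degree-bookkeeping step: it is the numerical coincidence $N+(n-b)-1=(n-b)d$ that collapses $m_{n-b}$ to a scalar matrix and makes the comparison possible, and one must track the indexing of the Buchsbaum--Rim resolution carefully to identify $\varphi$ as having entries linear in $x_{b+1},\dots,x_n$.
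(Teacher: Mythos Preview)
Your argument is correct and follows essentially the same route as the paper: the degree coincidence $N+n-b-1=(n-b)d$ forces $m_{n-b}$ to be a scalar matrix, the commutative square is reduced modulo $(x_{b+1},\dots,x_n)$ (using that the differentials of the top row have entries in this ideal), and the conclusion follows from the $k$-linear independence of $\bar g_1,\dots,\bar g_b$. The only cosmetic difference is that the paper packages the last step via the tensor-product decomposition $K_{n-b}(\bar g_1,\dots,\bar g_b,0,\dots,0)\simeq\bigoplus_j K_j(\bar g_1,\dots,\bar g_b)\otimes_k K_{n-b-j}(0,\dots,0)$ and argues injectivity of $d_j\otimes 1$ on each summand in degree $(n-b)d$, whereas you extract the same information by reading off the $e_S$-coefficients directly; the computations are identical.
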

\begin{proof}

The key observation is that \[-N-n+b+1=-(n-b)d.\] Since
$
K_{n-b}(g_1,\dots,g_n) \simeq S^{\binom{n}{n-b}}\bigl(-(n-b)d\bigr),
$
it follows that $m_{n-b}$ in \eqref{complex-map1} 
is given by a matrix of scalars. 
Hence it suffices to prove that 
\[
\operatorname{Im}(m_{n-b}\otimes_{S} \bar S)\subset K_{n-b}(g_{b+1},\dots,g_n)\otimes_{S} \bar S,\] where
$\bar S:=S/(x_{b+1}, \dots, x_n)$. 
Upon tensoring \eqref{complex-map1} with $\bar S$, all differentials in the 
top row become zero because \eqref{res-1} was a minimal resolution of an ideal in $k[x_{b+1},\dots, x_n]$. It follows that 
$\operatorname{Im}(m_{n-b}\otimes_{S} \bar S) \subset \ker (d_{n-b}\otimes_{S} \bar S)$. Since $\operatorname{Im}(m_{n-b})$ is generated in graded degree $(n-b)d$, 
we at last reduce to showing that 
$\ker (d_{n-b}\otimes_{S} \bar S)_{(n-b)d} \subset (K_{n-b}(g_{b+1},\dots,g_n)\otimes_{S} \bar S)_{(n-b)d}$.
Let 
$\bar g_i$ be the image of $g_i$ in $\bar S$ for $i=1,\dots, n$; we have
$\bar g_{b+1}=\cdots=\bar g_n=0$ by our assumption. 
Then
\begin{equation}
\begin{array}{l}
K_{n-b}(g_1,\dots, g_n)\otimes_{S} \bar S \simeq K_{n-b}(\bar g_1,\dots \bar g_b, 0, \dots, 0) \\
\vspace{-0.3cm}\\
\hspace{3cm}\simeq \bigoplus_{j=0}^{n-b} K_{j}(\bar g_1,\dots,\bar  g_b)\otimes_{k} K_{n-b-j}(0,\dots, 0).
\end{array}\label{tensor-product}
\end{equation}
Also, the inclusion $K_{n-b}(g_{b+1},\dots,g_n)\subset K_{n-b}(g_1,\dots,g_n)$
induces the isomorphism $K_{n-b}(g_{b+1},\dots,g_n)\otimes_{S} \bar S 
\simeq K_{0}(\bar g_1,\dots, \bar g_b)_{0}\otimes_{k} K_{n-b}(0,\dots, 0)_{(n-b)d}$. 
In \eqref{tensor-product}, $d_{n-b}\otimes_{S}\bar S$ restricts to $d_j\otimes 1$ on the summand
$K_{j}(\bar g_1,\dots, \bar g_b)\otimes_{k} K_{n-b-j}(0,\dots, 0)$. In graded degree $(n-b)d$,
this restriction is simply 
\[
\begin{array}{l}
d_j\otimes 1\colon K_{j}(\bar g_1,\dots, \bar g_b)_{dj} \otimes_k k^{\binom{n-b}{n-b-j}} 
\to\\
\hspace{5cm} K_{j-1}(\bar g_1,\dots, \bar g_b)_{dj} \otimes_k  k^{\binom{n-b}{n-b-j}} 
\end{array}
\] 
This map is injective for $n-b\geq j\geq 1$ since the
matrices defining the Koszul differentials in $K_{\bullet}(\bar g_1,\dots, \bar g_b)$ are of full rank, as $\bar g_1,\dots, \bar g_b$ are linearly independent over $k$. The claim follows.
\end{proof}

We now proceed to prove that $\tilde m_{n-b}=0$ implies 
$d_1 \circ \tilde m_1=0$. 
To lighten notation, we let $T_{\bullet}$ be the exact complex
given by the top row in diagram \eqref{complex-map-2}, so that $T_i:=S^{\ell_i}(-N-i+1)$ for 
$i=1,\dots, n-b$ and $T_0:=S$. 
\begin{claim}\label{null-homotopy} 
 The map of complexes $\tilde{m}\colon T_{\bullet} \to Q_{\bullet}$ is null-homotopic in homological degree
$i\leq n-b$.  Namely, for $i=0,\dots, n-b-1$, there exist maps 
$h_i\colon T_i \to Q_{i+1}$ such that 
\[
\tilde{m}_{i}=d_{i+1}\circ h_i + h_{i-1} \circ d_i \,\, \text{for every $i=1,\dots, n-b-1$}.
\] 
\end{claim}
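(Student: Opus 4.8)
The plan is to construct the homotopy by downward induction on $i$, anchored at the top by Claim~\ref{zero-map}, which gives $\tilde m_{n-b}=0$. Write $d^{T}_{\bullet}$ and $d^{Q}_{\bullet}$ for the differentials of the top and bottom rows of \eqref{complex-map-2}, and recall from the setup that $T_\bullet$ is a free resolution of $S/J$ (hence exact in every positive homological degree) and that each $Q_i$ with $i\ge 1$ is a free $S$-module. Having reduced to $b\le n-2$ (so $n-b\ge 2$), I will produce maps $h_i\colon T_i\to Q_{i+1}$ for $i=n-b-1,\dots,0$, maintaining the invariant that the relation $(*)_j\colon \tilde m_j=d^{Q}_{j+1}h_j+h_{j-1}d^{T}_j$ holds for every index $j$ already passed. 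The base of the induction is simply $h_{n-b-1}:=0$; each subsequent $h_{i-1}$ will be built so that, at the stage it is defined, the relation $(*)_i$ becomes valid.

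The inductive step has two parts. First, given $h_i,\dots,h_{n-b-1}$, set $u_i:=\tilde m_i-d^{Q}_{i+1}h_i\colon T_i\to Q_i$ and check that $u_i$ vanishes on $\ker(d^{T}_i)=\operatorname{Im}(d^{T}_{i+1})$, the equality being exactness of $T_\bullet$ in positive degree. This is a short diagram chase: the chain-map identity $\tilde m_i\, d^{T}_{i+1}=d^{Q}_{i+1}\tilde m_{i+1}$ together with the already-secured relation $(*)_{i+1}$ gives $u_i\, d^{T}_{i+1}=d^{Q}_{i+1}\bigl(\tilde m_{i+1}-h_i d^{T}_{i+1}\bigr)=d^{Q}_{i+1}d^{Q}_{i+2}h_{i+1}=0$; at the very top one instead uses $\tilde m_{n-b}=0$ and the choice $h_{n-b-1}=0$. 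Hence $u_i$ factors through $\operatorname{Im}(d^{T}_i)\hookrightarrow T_{i-1}$, yielding $\bar u_i\colon \operatorname{Im}(d^{T}_i)\to Q_i$ with $u_i=\bar u_i\circ d^{T}_i$.

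Second—and this is the crux—I must extend $\bar u_i$ to a genuine homomorphism $h_{i-1}\colon T_{i-1}\to Q_i$, for then $h_{i-1}d^{T}_i=\bar u_i d^{T}_i=u_i$, which is exactly $(*)_i$. Applying $\Hom_S(-,Q_i)$ to $0\to \operatorname{Im}(d^{T}_i)\to T_{i-1}\to T_{i-1}/\operatorname{Im}(d^{T}_i)\to 0$, the obstruction to extending $\bar u_i$ lies in $\operatorname{Ext}^1_S\bigl(T_{i-1}/\operatorname{Im}(d^{T}_i),\,Q_i\bigr)$. Now $T_{i-1}/\operatorname{Im}(d^{T}_i)$ is the $(i-1)$-st syzygy module of $S/J$, so dimension shifting and the freeness of $Q_i$ identify this group with a direct sum of shifted copies of $\operatorname{Ext}^i_S(S/J,S)$. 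Since $J=(x_{b+1},\dots,x_n)^{N}$ has grade equal to its codimension $n-b$ (equivalently $S/J\cong k[x_1,\dots,x_b]\otimes_k k[x_{b+1},\dots,x_n]/(x_{b+1},\dots,x_n)^{N}$ is Cohen–Macaulay of codimension $n-b$), we have $\operatorname{Ext}^i_S(S/J,S)=0$ for all $i<n-b$. As $i$ runs through $1,\dots,n-b-1$, this vanishing holds precisely in the required range (including $i=1$, where $n-b\ge 2$ is used), the extension exists, and $(*)_i$ is secured.

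I expect the genuine difficulty to be exactly this extension step: the factoring in the first part is purely formal, whereas passing from a map on the syzygy submodule to one on all of $T_{i-1}$ really needs the depth estimate $\operatorname{Ext}^i_S(S/J,S)=0$ for $i<n-b$. It is also worth recording why the construction stops at degree $1$ and asserts no relation at $i=0$: the relation $(*)_0$ would force $\tilde m_0=d^{Q}_1 h_0$, which is impossible because $\tilde m_0\colon S\to S/(g_{b+1},\dots,g_n)$ is surjective while $\operatorname{Im}\bigl(d^{Q}_1\colon Q_1\to Q_0\bigr)=I/(g_{b+1},\dots,g_n)$ is a proper submodule. This is consistent with $\tilde m$ inducing the nonzero surjection $S/J\twoheadrightarrow S/I$ on $\HH_0$, so that $\tilde m$ cannot be globally null-homotopic; the vanishing $\tilde m_{n-b}=0$ only propagates downward through homological degree $1$, which is exactly what the subsequent derivation of $d^{Q}_1\circ\tilde m_1=0$ will exploit.
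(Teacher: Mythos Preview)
Your proof is correct. The paper takes a closely related but slightly different route: rather than building the homotopy directly by downward induction, it dualizes. Applying $\Hom_S(-,S)$ makes $\Hom_S(T_\bullet,S)$ exact in cohomological degrees $0,\dots,n-b-1$ (precisely because $\operatorname{Ext}^j_S(S/J,S)=0$ for $j<\codim J=n-b$, the same vanishing you invoke), makes each $\Hom_S(Q_i,S)$ free, and turns $\tilde m_{n-b}=0$ into vanishing at the top of the dual map; one is then in the textbook situation of a chain map from a complex of projectives into an acyclic complex, which is null-homotopic by \cite[Porism 2.2.7]{weibel}, and dualizing back yields the $h_i$. Your direct construction is essentially the dual of this: where the paper uses projectivity of $\Hom_S(Q_i,S)$ to lift along the exact complex $\Hom_S(T_\bullet,S)$, you use exactness of $T_\bullet$ to factor through $\operatorname{Im}(d^T_i)$ and then the vanishing of $\operatorname{Ext}^1_S(T_{i-1}/\operatorname{Im}(d^T_i),Q_i)\cong\operatorname{Ext}^i_S(S/J,Q_i)$ to extend over $T_{i-1}$. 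Both arguments pivot on the same depth estimate; the paper's dualization packages it into a single citation, while your version makes the role of the $\operatorname{Ext}$ vanishing explicit at each inductive step.
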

\begin{proof}
It suffices to see that the dual map $\tilde{m}^{\vee} \colon \Hom_S(Q_{\bullet}, S) \to \Hom_S(T_{\bullet}, S)$
is null-homotopic. Note that the top row of \eqref{complex-map-2} is the resolution of the $S$-module $S/J$.
Since $\operatorname{Ext}^{j}_S(S/J,S)$ vanishes for $j<\codim J=n-b$, the complex $\Hom_S(T_{\bullet}, S)$ gives a resolution of $\Hom_S(T_{n-b},S)$.
Namely,
\[
0 \to \Hom_S(T_0, S) \to \Hom_S(T_1,S) \to \cdots \to
\Hom_S(T_{n-b},S)
\]
is exact. 
Since $\tilde{m}_{n-b}^{\vee}=0$ by Claim \ref{zero-map}, 
it follows by, e.g., \cite[Porism 2.2.7]{weibel}, that $\tilde{m}^{\vee}$ is null-homotopic.
\end{proof}

To finalize the proof of Lemma \ref{alg-claim-1}, it remains to observe that the element $h_0\in \Hom_S(S, Q_1)$
from Claim \ref{null-homotopy} 
must be zero because $h_0$ is a homomorphism of graded $S$-modules and $Q_1=S(-d)^{b}$.
It follows that $\tilde m_{1}= d_{2}\circ h_{1} + h_{0}\circ d_{1}=d_{2}\circ h_{1}$ and so
\[
d_1\circ \tilde m_{1}=d_1 \circ d_{2}\circ h_{1}=0,
\]
as desired.
\end{proof}

\begin{lemma}\label{alg-claim-2} Suppose $(g_1,\dots, g_n)$ is a complete intersection
ideal of type $(d)^n$ in $k[x_1,\dots,x_n]$ such that one has $(g_{b+1}, \dots, g_n) \subset (x_{b+1}, \dots,x_n)$ and\linebreak
$(x_{b+1}, \dots,x_n)^{(n-b)(d-1)+1}
 \subset (g_{b+1}, \dots, g_n).$ Then
$$
g_{b+1}, \dots, g_n\in k[x_{b+1},\dots, x_n].
$$
\end{lemma}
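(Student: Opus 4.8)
The plan is to show that each $g_i$ (for $i>b$) equals its reduction $\bar g_i:=g_i|_{x_1=\cdots=x_b=0}\in R:=k[x_{b+1},\dots,x_n]$, by peeling off the part of $g_i$ involving $x_1,\dots,x_b$ one graded piece at a time. Write $g_i=\sum_{a\ge 0}g_i^{[a]}$, where $g_i^{[a]}$ collects the monomials of degree exactly $a$ in $x_1,\dots,x_b$, so that $g_i^{[0]}=\bar g_i$; the goal is $g_i^{[a]}=0$ for all $a\ge 1$. Set $N:=(n-b)(d-1)+1$.

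First I would record the preliminaries. Reducing the inclusion $(x_{b+1},\dots,x_n)^{N}\subset(g_{b+1},\dots,g_n)$ modulo $(x_1,\dots,x_b)$ gives $(x_{b+1},\dots,x_n)^{N}\subset\bar I$, where $\bar I:=(\bar g_{b+1},\dots,\bar g_n)\subset R$. Hence $R/\bar I$ is Artinian, so the $n-b$ forms $\bar g_{b+1},\dots,\bar g_n$ are a homogeneous system of parameters, i.e.\ a regular sequence, in the $(n-b)$-variable ring $R$. Thus $R/\bar I$ is a balanced complete intersection of type $(d)^{n-b}$: it is Gorenstein of socle degree exactly $N-1=(n-b)(d-1)$, its syzygies are Koszul, and, crucially, $\bar I_e=0$ for $e<d$. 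Moreover Corollary \ref{L:gorenstein} applies to $R/\bar I$, giving $\bar I_e=\{\psi\in R_e:\psi\cdot R_{N-1-e}\subseteq\bar I_{N-1}\}$ for every $e$.

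The heart of the argument is an induction on $a$: assuming $g_i^{[1]}=\cdots=g_i^{[a-1]}=0$ for all $i>b$, I show $g_i^{[a]}=0$. Write $g_i^{[a]}=\sum_{|\alpha|=a}x^{\alpha}c_{i,\alpha}$ with $c_{i,\alpha}\in R_{d-a}$. Since $R_N=\big((x_{b+1},\dots,x_n)^N\big)_N$, the hypothesis lets us write any $\xi\in R_N$ as $\xi=\sum_{i>b}A_i g_i$ with $A_i\in S_{N-d}$. Reducing this identity modulo $(x_1,\dots,x_b)^{a+1}$ and using the inductive vanishing of the intermediate pieces, its part of $(x_1,\dots,x_b)$-degree $a$ collapses (the cross terms $A_i^{[e]}g_i^{[a-e]}$ with $0<e<a$ drop out) to $\sum_{i>b}\big(\bar A_i\,g_i^{[a]}+A_i^{[a]}\bar g_i\big)=0$, where $\bar A_i$ is the degree-$0$ part of $A_i$. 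Reading off the coefficient of a monomial $x^{\alpha}$ yields $\sum_{i>b}\bar A_i\,c_{i,\alpha}\in\bar I$. I would then argue that $(\bar A_i)$ may be taken arbitrary in $(R_{N-d})^{\,n-b}$: given a target tuple, the corresponding $\xi=\sum_i\bar A_i\bar g_i\in R_N$ lies in $(g_{b+1},\dots,g_n)$, and the $\bar A_i$ produced by an actual expression differ from the target only by a syzygy of $(\bar g_i)$; as all such syzygies are Koszul, this ambiguity changes $\sum_i\bar A_i c_{i,\alpha}$ by a multiple of some $\bar g_k$, hence by an element of $\bar I$. Specializing $(\bar A_i)=(\delta_{i i_0}\phi)$ for arbitrary $\phi\in R_{N-d}$ gives $\phi\,c_{i_0,\alpha}\in\bar I$; multiplying further by $R_{a-1}$ upgrades this to $c_{i_0,\alpha}\cdot R_{N-1-(d-a)}\subseteq\bar I_{N-1}$. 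By the Gorenstein duality recalled above this says precisely $c_{i_0,\alpha}\in\bar I_{d-a}$, and since $d-a<d$ we have $\bar I_{d-a}=0$, so $c_{i_0,\alpha}=0$. This closes the induction and yields $g_i=\bar g_i\in R$ for all $i>b$.

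The main obstacle, and the step I would write most carefully, is the middle one: upgrading the extracted relation $\sum_i\bar A_i c_{i,\alpha}\in\bar I$ so that it holds for an \emph{arbitrary} tuple $(\bar A_i)$ rather than only for those arising from a chosen expression of $\xi$. This is exactly where the regularity of $\bar g_{b+1},\dots,\bar g_n$ is essential, since it forces every syzygy to be Koszul and hence the correction term into $\bar I$; this in turn is what makes the final appeals to Gorenstein duality and to $\bar I_{<d}=0$ legitimate. I would also be careful with the bookkeeping of the modulo-$(x_1,\dots,x_b)^{a+1}$ reduction, checking that the cross terms vanish precisely by the inductive hypothesis and that the degree shift from $R_{N-d}$ to $R_{N-1-(d-a)}$ is absorbed by multiplying into the ideal $\bar I$.
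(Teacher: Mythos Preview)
Your argument is correct and complete. The key steps---regularity of $\bar g_i$, the graded peeling with the inductive hypothesis killing the cross terms, the syzygy correction showing the relation $\sum_i \bar A_i c_{i,\alpha}\in\bar I$ holds for \emph{arbitrary} tuples $(\bar A_i)\in (R_{N-d})^{n-b}$, and the final appeal to Gorenstein duality in $R/\bar I$ together with $\bar I_{<d}=0$---all go through as you describe. The bookkeeping in step 10 is fine because $R_{N-d}\cdot R_{a-1}=R_{N-1-(d-a)}$ in a polynomial ring (both degrees are nonnegative in the relevant range $1\le a\le d-1$).

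Your route is genuinely different from the paper's. The paper does not work graded-by-graded inside $R$; instead it introduces two auxiliary balanced complete intersection ideals in the \emph{full} ring $S$,
\[
J=(x_1^d,\dots,x_b^d,g_{b+1},\dots,g_n)\quad\text{and}\quad J'=(x_1^d,\dots,x_b^d,g'_{b+1},\dots,g'_n),
\]
where $g'_j=g_j(0,\dots,0,x_{b+1},\dots,x_n)$, and then shows $J=J'$ by comparing their top (socle) graded pieces via Corollary~\ref{L:gorenstein}(2). The inclusion $J'_{n(d-1)}\subset J_{n(d-1)}$ is a short direct computation using $g'_i\equiv g_i\pmod{(x_1,\dots,x_b)}$. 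From $J=J'$ one reads off $g_i\in k[x_{b+1},\dots,x_n]$ by degree reasons. The paper's argument is shorter and avoids the inductive peeling and the syzygy analysis entirely; your argument, by contrast, stays inside $R$, is more explicit about how each piece $g_i^{[a]}$ dies, and isolates exactly where the regularity of the $\bar g_i$ enters (namely, to force all syzygies to be Koszul so that the correction lands in $\bar I$). Both approaches ultimately rely on Corollary~\ref{L:gorenstein}, but on different parts: the paper uses part~(2) in $S$, while you use part~(1) in $R$.
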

\begin{proof}[Proof of Lemma \ref{alg-claim-2}]

Since $(x_{b+1}, \dots,x_n)^{(n-b)(d-1)+1}
 \subset (g_{b+1}, \dots, g_n)$, it follows that
\[
g'_j:=g_j(0,\dots, 0, x_{b+1}, \dots, x_{n}), \quad \text{$j=b+1,\dots, n$,}
\]
form a regular sequence in $k[x_{b+1},\dots,x_n]$. 

Consider the balanced complete intersection ideals
\begin{align}
J  &:=(x_1^d,\dots, x_{b}^{d}, g_{b+1},\dots,g_{n}),\nonumber \\
J' &:=(x_1^d,\dots, x_{b}^{d}, g'_{b+1},\dots,g'_{n}),\nonumber
\end{align}
each of which is generated by a regular sequence in $S$. To establish the lemma, it suffices to show that $J=J'$, which by Corollary \ref{L:gorenstein}(2) is equivalent to 
$J_{n(d-1)}=J'_{n(d-1)}$. 

Since $\dim_k J_{n(d-1)}=\dim_k J'_{n(d-1)}=\dim_k S_{n(d-1)}-1$, 
we only need to prove that $J'_{n(d-1)}\subset J_{n(d-1)}$, 
and in fact that \[(g'_{b+1},\dots,g'_{n})_{n(d-1)}\subset J_{n(d-1)}.\]
Fix an element \[\sum_{i=b+1}^n c_i g'_i \in (g'_{b+1},\dots,g'_{n})_{n(d-1)},\] where
$c_i \in S_{n(d-1)-d}$. 
Since $x_1^d,\dots,x_b^d\in J$
and \[k[x_{b+1}, \dots,x_n]_{(n-b)(d-1)+1} \subset J,\] we can assume that 
$c_i \in x_1^{d-1}\cdots x_b^{d-1} k[x_{b+1},\dots,x_n]_{(n-b)(d-1)-d}$.
As we have $g'_i \equiv g_i \pmod{(x_1,\dots,x_{b})}$, it follows
that
\[
\sum_{i=b+1}^n c_i g'_i \equiv \sum_{i=b+1}^n c_i g_i \pmod{(x_1^d,\dots, x_b^d)},\]
which shows that $\sum_{i=b+1}^n c_i g'_i\in J$ as required.
\end{proof}
This concludes the proof of Proposition \ref{P:recognition}.

\end{proof}

\section{Preservation of polystability}
\label{S:polystability}

In this section, we prove Theorem \ref{T:polystability}, which is the main result of this paper.
Take $U\in\Grass(n,S_d)_{\Res}$ and let $I:=(g \mid g\in U)\subset S$. Since the field
$k$ is perfect, we can use the Hilbert-Mumford numerical criterion to analyze the GIT stability 
of $\AA(U)$. So for any non-trivial 1-PS $\rho$ of $\SL(n)$ we choose a
basis $x_1,\dots, x_n$ of $S_1$ on which $\rho$ acts diagonally with weights 
$w_1\leq w_2\leq \cdots \leq w_n$. Note that $\rho$ acts with opposite weights on the dual
basis $z_1,\dots, z_n$ of $D_1$. 
To apply the numerical criterion to the form
$\AA(U)\in \PP \bigl(k[z_1,\dots,z_n]_{n(d-1)}\bigr)$, we observe that by (\ref{E:formula-associated}) a monomial $z_1^{d_1}\cdots z_n^{d_n}$
of degree $n(d-1)$ appears with a non-zero coefficient in $\AA(U)$ 
if and only if $x_1^{d_1}\cdots x_n^{d_n} \notin I_{n(d-1)}$. The following lemma allows us to produce
such monomials. Before stating the lemma, recall
that $x_1^{a_1}\dots x_n^{a_n}<_{grevlex} x_1^{b_1}\dots x_n^{b_n}$ if and only 
if either $\sum_{i=1}^n a_i <  \sum_{i=1}^n b_i$ or $\sum_{i=1}^n a_i =\sum_{i=1}^n b_i $ and the last non-zero entry of the vector $(a_1,\dots,a_n)-(b_1,\dots,b_n)$ is positive. 

\begin{lemma}[Grevlex Lemma]\label{grevlex} Fix $1\leq a\leq n$ and $N\geq 0$. 
Suppose 
\[
\bigl((x_{a+1},\dots, x_{n})^N\bigr)_{n(d-1)} \not\subset I_{n(d-1)}.
\]
Let $M=x_1^{d_1}\cdots x_n^{d_n}$ be the smallest with respect to $<_{grevlex}$ monomial
that belongs to $\bigl((x_{a+1},\dots, x_{n})^N\bigr)_{n(d-1)}$ and that does not lie in $I_{n(d-1)}$.
Then for every $i=1,\dots, a$ we have 
\[
d_1+\cdots+d_i \leq i(d-1).
\]
In particular, taking $a=n$, we conclude that if $M=x_1^{d_1}\cdots x_n^{d_n}$ is the smallest with respect to $<_{grevlex}$ monomial
in $k[x_1,\dots,x_n]_{n(d-1)}\setminus I_{n(d-1)}$, then for every $i=1,\dots, n$ we have 
\[
d_1+\cdots+d_i \leq i(d-1).
\]
\end{lemma}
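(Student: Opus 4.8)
The plan is to prove the contrapositive-style statement by analyzing the minimal monomial $M$ with respect to the graded reverse lexicographic order. Suppose for contradiction that the conclusion fails for some index $i$ with $1 \le i \le a$, so that $d_1 + \cdots + d_i \ge i(d-1) + 1$. The key idea is to exploit the minimality of $M$ together with the structure of the complete intersection ideal $I$. Since $M = x_1^{d_1} \cdots x_n^{d_n}$ lies in $\bigl((x_{a+1}, \dots, x_n)^N\bigr)_{n(d-1)}$ but not in $I_{n(d-1)}$, and since $I$ is a Gorenstein ideal of socle degree $n(d-1)$, I can use the perfect pairing $\cA_e \times \cA_{n(d-1)-e} \to \cA_{n(d-1)}$ from Corollary \ref{L:gorenstein} to detect membership in $I$.

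First I would localize attention to the variables $x_1, \dots, x_i$. The assumption $d_1 + \cdots + d_i \ge i(d-1)+1$ says that the initial segment of the exponent vector is \emph{too heavy}: it exceeds the socle degree $i(d-1)$ of the balanced complete intersection in the first $i$ variables. The natural move is to consider the quotient ring obtained by specializing $x_{i+1}, \dots, x_n$ appropriately, or to compare $M$ against a smaller monomial in the grevlex order. Concretely, I expect to produce a strictly $<_{grevlex}$-smaller monomial $M'$ that still lies in $\bigl((x_{a+1}, \dots, x_n)^N\bigr)_{n(d-1)} \setminus I_{n(d-1)}$, contradicting the minimality of $M$. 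The mechanism for producing $M'$ should be to trade a power of $x_i$ for a power of $x_n$ (or some $x_j$ with $j > i$): since $M \notin I_{n(d-1)}$, there is a pairing element $h \in S_{d}$ — in fact one can look for a generator of $I$ that is the leading obstruction — and the overweight condition on $x_1, \dots, x_i$ forces the existence of an element of $I_d$ supported in these variables whose leading term divides $M$. This is where the complete intersection hypothesis is essential: in $k[x_1, \dots, x_i]$, the socle degree being $i(d-1)$ means any monomial of grevlex-degree exceeding $i(d-1)$ in these variables must be reducible modulo the induced ideal.

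The hard part will be making the trade precise while staying inside the prescribed subspace $\bigl((x_{a+1}, \dots, x_n)^N\bigr)_{n(d-1)}$. The delicate point is that lowering $d_i$ (with $i \le a$) and compensating with $x_n$ must (i) strictly decrease $M$ in grevlex — which is automatic since we move weight to a later variable while keeping total degree fixed, so the last nonzero entry of the difference vector is positive — and (ii) preserve non-membership in $I$. For (ii), I would argue that if every grevlex-smaller monomial in the subspace were in $I_{n(d-1)}$, then $M$ itself would be forced into $I_{n(d-1)}$ via a standard argument with leading terms and the Gorenstein pairing, contradicting $M \notin I_{n(d-1)}$. The second displayed statement, the case $a = n$, is then immediate by specialization: taking $N = 0$ removes the containment constraint entirely, so $M$ is simply the grevlex-minimal monomial outside $I_{n(d-1)}$, and the inequalities $d_1 + \cdots + d_i \le i(d-1)$ for all $i$ follow from the general case verbatim.
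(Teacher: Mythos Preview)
Your overall strategy is exactly the paper's: assume $d_1+\cdots+d_i>i(d-1)$ for some $i\le a$, and manufacture a strictly $<_{grevlex}$-smaller monomial $M'$ in $\bigl((x_{a+1},\dots,x_n)^N\bigr)_{n(d-1)}\setminus I_{n(d-1)}$, contradicting minimality. You also correctly locate the key insight: the first $i$ variables are ``overweight'' relative to the socle degree of the image of $I$ in $k[x_1,\dots,x_i]$.

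However, your concrete mechanism has a genuine gap. You write that the overweight condition ``forces the existence of an element of $I_d$ supported in these variables whose leading term divides $M$.'' This is not what holds, and chasing leading terms or the Gorenstein pairing here is a red herring. There need not be any element of $I_d$ supported only in $x_1,\dots,x_i$, and no divisibility of leading terms is in play. What is actually true (and what the paper uses) is this: the image $J$ of $I$ in $k[x_1,\dots,x_i]\simeq S/(x_{i+1},\dots,x_n)$ is a codimension-$i$ ideal generated in degree $d$, so by Lemma~\ref{L:dimension-0} one has $J_K=k[x_1,\dots,x_i]_K$ for all $K>i(d-1)$. In particular $x_1^{d_1}\cdots x_i^{d_i}\in J$, which means
\[
x_1^{d_1}\cdots x_i^{d_i}\in I+(x_{i+1},\dots,x_n),
\]
hence
\[
M\in I+(x_{i+1},\dots,x_n)\,x_{i+1}^{d_{i+1}}\cdots x_n^{d_n}.
\]
Since $M\notin I_{n(d-1)}$, some monomial $M'$ in $\bigl((x_{i+1},\dots,x_n)\,x_{i+1}^{d_{i+1}}\cdots x_n^{d_n}\bigr)_{n(d-1)}$ lies outside $I_{n(d-1)}$. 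This $M'$ has its last $n-i$ exponents at least $d_{i+1},\dots,d_n$ with one strictly larger, so $M'<_{grevlex}M$ automatically, and since $i\le a$ we also get $M'\in(x_{a+1},\dots,x_n)^N$ for free. No pairing argument is needed for step (ii); it is a one-line linear-algebra observation once you have the containment $M\in I+(\text{stuff})$.

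So: drop the references to the Gorenstein pairing and to leading-term divisibility, and replace them with the reduction modulo $(x_{i+1},\dots,x_n)$ plus Lemma~\ref{L:dimension-0}. With that fix your plan becomes the paper's proof verbatim, and the ``delicate'' point you flagged about staying inside $(x_{a+1},\dots,x_n)^N$ dissolves.
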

\begin{proof} We only use the fact that $I$ is generated in degree $d$ and has 
codimension $n$ in $S$. 
By way of contradiction, suppose that $d_1+\cdots+d_i > i(d-1)$ for some $i\leq a$. Choose a basis $g_1,\dots,g_n$ in $U$ and
let $J\subset k[x_1,\dots,x_i]$ be the ideal generated by the forms
$g_j(x_1,\dots,x_i,0,\dots,0)$, for $j=1,\dots, n$. Then 
\[
\dim k[x_1,\dots,x_i]/J=\dim S/(g_1,\dots,g_n,x_{i+1},\dots,x_n)=0.
\]
Hence by Lemma \ref{L:dimension-0}, $(k[x_1,\dots,x_i]/J)_{K}=0$ for all $K>i(d-1)$.
Thus
$x_1^{d_1}\cdots x_i^{d_i} \in J$,
and so
$x_1^{d_1}\cdots x_i^{d_i} \in I + (x_{i+1}, \dots, x_n)$. 
We conclude that
\[
M=x_1^{d_1}\cdots x_i^{d_i} x_{i+1}^{d_{i+1}}\cdots x_n^{d_n} 
\in I + (x_{i+1}, \dots, x_n)x_{i+1}^{d_{i+1}}\cdots x_n^{d_n}.
\]
As $M \notin I_{n(d-1)}$, there is a monomial $M' \in \bigl((x_{i+1}, \dots, x_n)x_{i+1}^{d_{i+1}}\cdots x_n^{d_n}\bigr)_{n(d-1)}$
that does not lie in $I_{n(d-1)}$ either. Since $i\leq a$, we have
$M'\in (x_{a+1},\dots, x_{n})^N$. 
However, $M'<_{grevlex} M$, which contradicts our choice of $M$.\end{proof}

\subsection{Proof of semistability}
\label{SS:semistability}
Let
$M=x_1^{d_1}\cdots x_n^{d_n}$ be the smallest with respect to $<_{grevlex}$ monomial of degree $n(d-1)$ that does not lie in $I_{n(d-1)}$. Then $M^{\vee}:=z_1^{d_1}\cdots z_n^{d_n}$ appears with a non-zero coefficient in $\AA(U)$.
By Lemma \ref{grevlex}, we have 
\[
d_1+\cdots+d_i \leq i(d-1), \quad \text{for all $1\leq i \leq n$}.
\]
Hence the $\rho$-weight of $z_1^{d_1}\cdots z_n^{d_n}$ satisfies
\begin{multline*}
-\sum w_id_i =  \sum_{i=1}^{n} (w_{i+1}-w_{i})(d_1+\cdots+d_i) \quad \text{(here we set $w_{n+1}:=0$})\\
\leq \sum_{i=1}^{n} (d-1) i(w_{i+1}-w_{i})=-(d-1)\sum_{i=1}^n w_i=0. 
\end{multline*}
The Hilbert-Mumford numerical criterion then implies that $\AA(U)$ is semistable.

\subsection{Proof of polystability: decomposable case}

To prove Theorem  \ref{T:polystability},
we proceed by induction on $n$. The base case is $n=1$, where the statement is obvious 
because the only balanced complete intersection ideal for $n=1$ is $(x_1^d) \subset k[x_1]$ 
and the corresponding associated form is $z_1^{d-1}$, up to a non-zero scalar. 

Suppose that the theorem is established for all positive integers less than a given $n\ge 2$
and $U\in \Grass(n, S_d)_{\Res}$ is polystable. If $U$ is decomposable, then by Corollary \ref{C:decomposable},
we can assume that for some $1\leq a\leq n-1$, we have a decomposition 
$U=U_1\oplus U_2$, where $U_1\in \Grass(a, k[x_1,\dots,x_a]_d)_{\Res}$ 
and $U_2\in \Grass(n-a, k[x_{a+1},\dots,x_n]_d)_{\Res}$. By Lemma \ref{L:decomposable}, we have
\[
\AA(U)=\AA(U_1)\AA(U_2).
\]
Since $U_1$ and $U_2$ are polystable with respect to $\SL(a)$ and $\SL(n-a)$ actions, respectively, 
the induction hypothesis and the following 
standard result finalizes the proof in the case of a decomposable $U$:
\begin{lemma}\label{L:luna} Let $V=V_1 \oplus V_2$, with $n_i:=\dim V_i \geq 1$.
Suppose $F_1\in \Sym^{d_1} V_1$ and  $F_2\in \Sym^{d_2}V_2$ are both non-zero, where 
$n_1d_2=n_2d_1$.
Then $F:=F_1F_2$ considered as an element of $\Sym^{d_1+d_2}V$ 
is $\SL(V)$-polystable if $F_i$ is $\SL(V_i)$-polystable for each $i$.
\end{lemma}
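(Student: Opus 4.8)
The plan is to realize $\SL(V_1)\times\SL(V_2)$ as the derived group of the centralizer of a suitable central $1$-PS, and then to transfer polystability upward to $\SL(V)$ by a standard Levi-type criterion. Let $\lambda_0$ be the $1$-PS of $\GL(V)$ acting by the scalar $t^{n_2}$ on $V_1$ and by $t^{-n_1}$ on $V_2$. Its total weight is $n_1 n_2 + n_2(-n_1)=0$, so $\lambda_0$ is a $1$-PS of $\SL(V)$. On the summand $\Sym^{d_1}V_1\otimes\Sym^{d_2}V_2\subset\Sym^{d_1+d_2}V$ it acts with weight $n_2 d_1-n_1 d_2$, which vanishes by the balance hypothesis $n_1 d_2=n_2 d_1$; hence $\lambda_0$ fixes the point $F=F_1F_2$. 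Its centralizer in $\SL(V)$ is the connected reductive Levi subgroup $L:=\SL(V)\cap\bigl(\GL(V_1)\times\GL(V_2)\bigr)=S\bigl(\GL(V_1)\times\GL(V_2)\bigr)$, whose derived group is $\SL(V_1)\times\SL(V_2)$ and whose connected center $Z(L)^0$ is the image of $\lambda_0$.

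First I would reduce $L$-polystability of $F$ to $(\SL(V_1)\times\SL(V_2))$-polystability. Writing $L=(\SL(V_1)\times\SL(V_2))\cdot Z(L)^0$ and using that $Z(L)^0$, being the image of $\lambda_0$, fixes $F$, one obtains the orbit identity $L\cdot F=(\SL(V_1)\times\SL(V_2))\cdot F$. Consequently $L\cdot F$ is closed in $\Sym^{d_1+d_2}V$ if and only if the $(\SL(V_1)\times\SL(V_2))$-orbit of $F$ is closed.

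Next I would prove that $F=F_1\otimes F_2$ is polystable for the product group $\SL(V_1)\times\SL(V_2)$ acting on $\Sym^{d_1}V_1\otimes\Sym^{d_2}V_2$, using the Hilbert-Mumford criterion and its refinement for closed orbits, valid over our perfect field by \cite{kempf}. The key point is additivity of the Mumford numerical invariant along a $1$-PS $(\lambda_1,\lambda_2)$: since weights add under tensor product, $\mu\bigl(F_1\otimes F_2,(\lambda_1,\lambda_2)\bigr)=\mu(F_1,\lambda_1)+\mu(F_2,\lambda_2)$. As each $F_i$ is semistable, each summand is nonnegative, so $F$ is semistable; and whenever the total invariant vanishes, both summands vanish. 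Polystability of each $F_i$ then yields $\lim_{t\to 0}\lambda_i(t)\cdot F_i=h_i\cdot F_i$ for some $h_i\in\SL(V_i)$, whence $\lim_{t\to 0}(\lambda_1,\lambda_2)(t)\cdot F=(h_1 F_1)\otimes(h_2 F_2)$ lies in the orbit of $F$. By the refined criterion, the $(\SL(V_1)\times\SL(V_2))$-orbit of $F$ is closed, so by the previous paragraph $F$ is $L$-polystable.

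The final, and main, step is the transfer from $L$-polystability to $\SL(V)$-polystability, and this is where I expect the real difficulty to lie. The hard part is precisely this Levi reduction: I would invoke the standard fact from GIT that for a point fixed by a $1$-PS $\lambda_0$, its $G$-orbit is closed if and only if its orbit under the centralizer $L=Z_G(\lambda_0)$ is closed (see \cite{GIT,kempf}; this is the input behind the name of the lemma). The nontrivial direction needed here—that closedness of the $L$-orbit forces closedness of the $G$-orbit—rests on Kempf's theory of optimal destabilizing $1$-PS: were $\SL(V)\cdot F$ nonclosed, an optimal $1$-PS could be chosen to commute with $\lambda_0$ and hence to lie in $L$, contradicting $L$-polystability. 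Finally, since $F$ is nonzero, closedness of its $\SL(V)$-orbit also gives $0\notin\overline{\SL(V)\cdot F}$, so $F$ is polystable in the required sense, completing the proof.
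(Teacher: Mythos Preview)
Your proof is correct and follows essentially the same route as the paper's: introduce the central 1-PS $\lambda_0$ fixing $F$, reduce $\SL(V)$-polystability to polystability for the centralizer $L=S(\GL(V_1)\times\GL(V_2))$ via the Kempf/Luna result, and then further to $\SL(V_1)\times\SL(V_2)$-polystability of $F_1\otimes F_2$. The only cosmetic differences are that the paper notes the full center of $L$ acts on $F$ by roots of unity (rather than your observation that the connected center is the image of $\lambda_0$ and hence fixes $F$), and that you spell out the product-group polystability via additivity of the Hilbert--Mumford invariant, which the paper leaves implicit.
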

\begin{proof}
Let $\lambda$ be the one-parameter subgroup of $\SL(V)$ such that $V_1$ is the
weight space of $\lambda$ with weight $-n_2$ and $V_2$ is the weight space
of $\lambda$ with weight $n_1$. Then $\lambda$ stabilizes $F$ by the
assumption $n_1d_2=n_2d_1$. 
The centralizer of $\lambda$ in $\SL(V)$ is 
\[
C_{\SL(V)}(\lambda)=\bigl(\GL(V_1)\times \GL(V_2)\bigr) \cap \SL(V).
\] Since $\chark(k)=0$, \cite[Corollary 4.5(a)]{kempf} (see also \cite[Corollaire 2 and Remarque 1]{luna-adherences})
applies, and so the $\SL(V)$-orbit of $F$ is closed if the $C_{\SL(V)}(\lambda)$-orbit of $F$ is closed. 
However, by the assumption $n_1d_2=n_2d_1$, every element of the center of $\bigl(\GL(V_1)\times \GL(V_2)\bigr) \cap \SL(V)$ acts on $F$ as multiplication by a root of unity.
It follows that the $C_{\SL(V)}(\lambda)$-orbit of $F$ is closed
if and only if the $\SL(V_1)\times \SL(V_2)$-orbit of $F$ is closed, i.e., if and only if the $\SL(V_i)$-orbit of $F_i$ is closed for $i=1,2$.
\end{proof}

\subsection{Proof of polystability: indecomposable case}

Suppose that an element $U\in \Grass(n,S_d)_{\Res}$ is indecomposable. We will use the notation and keep in mind the conclusion of \S\ref{SS:semistability}. Assume that for some $\rho$ the limit $\lim_{t\to 0} \rho(t)\cdot \overline{\AA(U)}$ exists, where $ \overline{\AA(U)}$ is a lift of $\AA(U)$ to $D_{n(d-1)}$.
This implies that $w_{\rho}(M)=0$, where $w_{\rho}(M)$ is the $\rho$-weight of $M=x_1^{d_1}\cdots x_n^{d_n}$. From this we deduce a number of preliminary results.   
\begin{lemma}\label{L:weights} 
Let $1\le a\leq n-1$ be the index such that $w_{a+1}=\cdots=w_n$ and $w_a<w_{a+1}$. Then $d_1+\cdots+d_a=a(d-1)$. 
\end{lemma}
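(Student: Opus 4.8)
The plan is to extract Lemma \ref{L:weights} directly from the equality case of the semistability estimate carried out in \S\ref{SS:semistability}. Recall that $M=x_1^{d_1}\cdots x_n^{d_n}$ is the $<_{grevlex}$-smallest monomial of degree $n(d-1)$ not lying in $I_{n(d-1)}$, so that $M^\vee=z_1^{d_1}\cdots z_n^{d_n}$ occurs with nonzero coefficient in $\AA(U)$, and the Grevlex Lemma gives $d_1+\cdots+d_i\le i(d-1)$ for all $i$. The existence of $\lim_{t\to 0}\rho(t)\cdot\overline{\AA(U)}$ forces every monomial of $\overline{\AA(U)}$, in particular $M^\vee$, to have non-negative $\rho$-weight; since the semistability computation already shows $w_\rho(M)=-\sum w_id_i\le 0$, we obtain $w_\rho(M)=0$, which is precisely the standing hypothesis recorded in the text preceding the lemma.

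Next I would revisit the Abel-summation identity
\[
-\sum_{i=1}^n w_id_i=\sum_{i=1}^n (w_{i+1}-w_i)(d_1+\cdots+d_i),\qquad w_{n+1}:=0,
\]
together with the vanishing $\sum_{i=1}^n (w_{i+1}-w_i)\,i(d-1)=0$ established in \S\ref{SS:semistability}. Subtracting, the condition $w_\rho(M)=0$ is equivalent to
\[
\sum_{i=1}^n (w_{i+1}-w_i)\big[(d_1+\cdots+d_i)-i(d-1)\big]=0.
\]
The crucial observation is that the $i=n$ summand vanishes identically, since $d_1+\cdots+d_n=n(d-1)$ is just the total degree of $M$; hence the sum effectively runs over $i=1,\dots,n-1$, where $w_{i+1}-w_i\ge 0$ (the weights are nondecreasing) and $(d_1+\cdots+d_i)-i(d-1)\le 0$ (the Grevlex Lemma). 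Thus every summand is non-positive, and a sum of non-positive terms equal to zero must be termwise zero.

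Finally, I would apply this termwise vanishing at the index $i=a$. By definition of $a$ we have $w_a<w_{a+1}$, i.e.\ $w_{a+1}-w_a>0$, so the relation $(w_{a+1}-w_a)\big[(d_1+\cdots+d_a)-a(d-1)\big]=0$ forces $d_1+\cdots+d_a=a(d-1)$, as claimed. (Such an index exists because $\rho$ is a non-trivial one-parameter subgroup of $\SL(n)$, so the nondecreasing weights sum to zero without being all equal, and $a$ is the largest index at which they jump.) I do not expect a genuine obstacle here: the content lies entirely in the equality case of an inequality already proved, and the only point requiring care is noticing that the degree constraint $d_1+\cdots+d_n=n(d-1)$ kills the top ($i=n$) term, so that the sign information $w_{i+1}-w_i\ge 0$ is needed only in the range $1\le i\le n-1$.
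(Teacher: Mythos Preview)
Your argument is correct and is essentially the same idea as the paper's: both extract the equality case of the Abel-summation inequality from \S\ref{SS:semistability}, you by termwise vanishing of a sum of non-positive terms, the paper by a short contradiction (shift one unit of exponent from an index $>a$ to index $a$ and observe the weight strictly decreases while remaining $\geq 0$). One minor slip: in the paper's notation $w_\rho(M)=\sum w_id_i$, not $-\sum w_id_i$ (the latter is the $\rho$-weight of $M^\vee$); since the conclusion is $=0$ this is harmless.
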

 \begin{proof}
Assuming that $d_1+\cdots+d_a\leq a(d-1)-1$, we see 
\[
0=w_{\rho}(M)=\sum_{i=1}^n d_i w_i > \left(\sum_{i=1}^{a-1} d_i w_i\right) + (d_a+1)w_a +\left(\sum_{i=a+1}^n d_i-1\right)w_n \geq 0,
\]
which is impossible.
 \end{proof}
 
\begin{lemma}\label{L:complete} 
Let $a$ be the integer introduced in Lemma \ref{L:weights}.
Then the homomorphic image of $I$ in $k[x_1,\dots,x_n]/(x_{a+1},\dots,x_n)\simeq k[x_1,\dots,x_a]$ 
is a balanced complete intersection ideal of type $(d)^a$ in $k[x_1,\dots,x_a]$.
\end{lemma}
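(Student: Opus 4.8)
The plan is to apply Lemma \ref{L:dimension-0} to the homomorphic image of $I$, thereby reducing the assertion to a single non-vanishing statement in the top graded degree. Write $\bar S := k[x_1,\dots,x_a] = S/(x_{a+1},\dots,x_n)$ and let $J \subset \bar S$ be the image of $I$. Since $I$ is generated in degree $d$, so is $J$, and moreover $\bar S/J \cong S/\bigl(I + (x_{a+1},\dots,x_n)\bigr)$ is a quotient of the Artinian ring $S/I$, hence Artinian; thus $J$ has codimension $a$ in $\bar S$. Lemma \ref{L:dimension-0}, applied with $n$ replaced by $a$, then says that either $J$ is a balanced complete intersection ideal of type $(d)^a$---which is exactly the conclusion we want---or $(\bar S/J)_{a(d-1)} = 0$. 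It therefore suffices to rule out the second alternative.

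To do this, I would exhibit a nonzero class in $(\bar S/J)_{a(d-1)}$ by showing that the monomial $x_1^{d_1}\cdots x_a^{d_a}$, which has degree $d_1+\cdots+d_a = a(d-1)$ by Lemma \ref{L:weights}, does not lie in $J$. Arguing by contradiction, if $x_1^{d_1}\cdots x_a^{d_a}\in J$ then $x_1^{d_1}\cdots x_a^{d_a}\in I + (x_{a+1},\dots,x_n)$ in degree $a(d-1)$, and multiplying by $x_{a+1}^{d_{a+1}}\cdots x_n^{d_n}$ gives
\[
M = x_1^{d_1}\cdots x_n^{d_n} \in I + (x_{a+1},\dots,x_n)\,x_{a+1}^{d_{a+1}}\cdots x_n^{d_n}.
\]
Since $M\notin I_{n(d-1)}$ and the ideal $(x_{a+1},\dots,x_n)\,x_{a+1}^{d_{a+1}}\cdots x_n^{d_n}$ is spanned by monomials, there must be a monomial $M'$ in $\bigl((x_{a+1},\dots,x_n)\,x_{a+1}^{d_{a+1}}\cdots x_n^{d_n}\bigr)_{n(d-1)}$ with $M'\notin I_{n(d-1)}$.

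The crux---and the step I expect to require the most care to state cleanly---is the grevlex comparison showing $M' <_{grevlex} M$, which contradicts the minimality of $M$ fixed in \S\ref{SS:semistability}. Every such $M'$ is divisible by $x_{a+1}^{d_{a+1}}\cdots x_n^{d_n}$ and by at least one further variable among $x_{a+1},\dots,x_n$, so in the exponent vector of $M'-M$ the entries in positions $a+1,\dots,n$ are all nonnegative with strictly positive sum. Hence the last nonzero entry of this difference lies in that final block and is positive, giving $M' <_{grevlex} M$ exactly as in the proof of the Grevlex Lemma (Lemma \ref{grevlex}). This contradiction shows $x_1^{d_1}\cdots x_a^{d_a}\notin J$, so $(\bar S/J)_{a(d-1)}\neq 0$, and Lemma \ref{L:dimension-0} then forces $J$ to be a balanced complete intersection ideal of type $(d)^a$, as desired.
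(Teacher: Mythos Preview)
Your proof is correct and follows essentially the same argument as the paper's: both reduce to Lemma \ref{L:dimension-0}, use Lemma \ref{L:weights} to identify $x_1^{d_1}\cdots x_a^{d_a}$ as the relevant degree-$a(d-1)$ monomial, and then derive a contradiction with the grevlex minimality of $M$ via the same containment $M\in I + (x_{a+1},\dots,x_n)x_{a+1}^{d_{a+1}}\cdots x_n^{d_n}$. Your version simply makes explicit a couple of details (the codimension of $J$ and the grevlex comparison) that the paper leaves implicit.
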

\begin{proof}
Denote the image ideal by $J$ and suppose that $J$ is not a complete intersection ideal. 
Then by Lemma \ref{L:dimension-0} we have $J_{a(d-1)}=(x_1,\dots,x_a)_{a(d-1)}$, 
and Lemma \ref{L:weights} implies
$x_1^{d_1}\cdots x_a^{d_a} \in J_{a(d-1)}$. This means
\[
x_1^{d_1}\cdots x_a^{d_a} \in I_{a(d-1)} + (x_{a+1},\dots, x_n).
\]
But then 
\[
M=x_1^{d_1}\cdots x_a^{d_a} x_{a+1}^{d_{a+1}}\cdots x_n^{d_n} 
\in I + (x_{a+1}, \dots, x_n)x_{a+1}^{d_{a+1}}\cdots x_n^{d_n}.
\]
Since $M\not\in I_{n(d-1)}$, there is a monomial 
\[
M' \in ((x_{a+1}, \dots, x_n)x_{a+1}^{d_{a+1}}\cdots x_n^{d_n})_{n(d-1)}
\]
that does not lie in $I_{n(d-1)}$ either. However, $M'<_{grevlex} M$, which is a contradiction.
\end{proof}

\begin{lemma}\label{L:B-cond} We have 
\[\label{e-conjecture}
k[x_{a+1}, \dots,x_n]_{(n-a)(d-1)+1}\subset I_{(n-a)(d-1)+1}.
\]
\end{lemma}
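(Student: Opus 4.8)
The plan is to argue by contradiction. Suppose some monomial $m\in k[x_{a+1},\dots,x_n]_{N}$, where $N:=(n-a)(d-1)+1$, does not lie in $I$. I would first promote this low-degree failure to a witness in the top degree $n(d-1)$. Since $S/I$ is Gorenstein of socle degree $n(d-1)$, the multiplication pairing $(S/I)_N\times (S/I)_{n(d-1)-N}\to (S/I)_{n(d-1)}$ is perfect (Corollary \ref{L:gorenstein}), and $n(d-1)-N=a(d-1)-1\ge 0$. As the image of $m$ is nonzero, there is $c\in S_{a(d-1)-1}$ with $cm\notin I_{n(d-1)}$; choosing a monomial summand of $cm$ not lying in $I$ produces a monomial $P\notin I_{n(d-1)}$ divisible by $m$. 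In particular $P\in\bigl((x_{a+1},\dots,x_n)^{N}\bigr)_{n(d-1)}$, so this graded piece is not contained in $I_{n(d-1)}$, and the Grevlex Lemma (Lemma \ref{grevlex}, applied with the given $a$ and with $N$) becomes available.

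Next I would extract the key numerical tension. Let $M^{\dagger}=x_1^{\delta_1}\cdots x_n^{\delta_n}$ be the $<_{grevlex}$-smallest monomial of $\bigl((x_{a+1},\dots,x_n)^{N}\bigr)_{n(d-1)}$ not lying in $I_{n(d-1)}$. On one hand, Lemma \ref{grevlex} gives $\delta_1+\cdots+\delta_i\le i(d-1)$ for $1\le i\le a$. On the other hand, $M^{\dagger}\in (x_{a+1},\dots,x_n)^{N}$ forces $\delta_{a+1}+\cdots+\delta_n\ge N$, hence
\[
\delta_1+\cdots+\delta_a\le n(d-1)-N=a(d-1)-1.
\]
It is this strict deficit at index $a$ that I will contradict.

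Finally I would rerun the weight computation of \S\ref{SS:semistability} for $M^{\dagger}$. Recall from Lemma \ref{L:weights} that $w_{a+1}=\cdots=w_n=:w$ and $w_a<w$; since $\sum_i w_i=0$ and the weights are not all equal, $w>0$, and moreover $\sum_{i=1}^a w_i=-(n-a)w$. Writing $S_i:=\delta_1+\cdots+\delta_i$ and $w_{n+1}:=0$, Abel summation gives $\sum_i w_i\delta_i=-\sum_{i=1}^{n}(w_{i+1}-w_i)S_i$. Only the indices $1\le i\le a$ and $i=n$ contribute, and using $S_i\le i(d-1)$ together with the identity $\sum_{i=1}^a i(w_{i+1}-w_i)=nw$ one gets $\sum_{i=1}^{n}(w_{i+1}-w_i)S_i\le (d-1)nw-wn(d-1)=0$, i.e.\ $\sum_i w_i\delta_i\ge 0$. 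But $M^{\dagger}\notin I_{n(d-1)}$ means $(M^{\dagger})^{\vee}$ occurs in $\AA(U)$, so the standing assumption that $\lim_{t\to 0}\rho(t)\cdot\overline{\AA(U)}$ exists forces the $\rho$-weight $-\sum_i w_i\delta_i$ of $(M^{\dagger})^{\vee}$ to be nonnegative, i.e.\ $\sum_i w_i\delta_i\le 0$. Therefore $\sum_i w_i\delta_i=0$, and since each summand $(w_{i+1}-w_i)\bigl(i(d-1)-S_i\bigr)$ is nonnegative, equality forces $S_a=a(d-1)$ (because $w_{a+1}-w_a>0$), contradicting $\delta_1+\cdots+\delta_a\le a(d-1)-1$.

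The main obstacle is not any single computation but the correct packaging of the two inputs: one must first manufacture, out of a low-degree monomial outside $I$, a top-degree monomial lying in $(x_{a+1},\dots,x_n)^{N}$ so that the Grevlex Lemma applies, and then observe that the \emph{equality} case of the semistability weight estimate pins the partial sum $\delta_1+\cdots+\delta_a$ to exactly $a(d-1)$, in direct conflict with the deficit forced by membership in $(x_{a+1},\dots,x_n)^{N}$. The Abel-summation bookkeeping is then identical to that already carried out for semistability.
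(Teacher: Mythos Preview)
Your proof is correct and follows essentially the same route as the paper: reduce via the Gorenstein pairing (which is exactly how the paper invokes Corollary~\ref{L:gorenstein}(1)) to the statement $\bigl((x_{a+1},\dots,x_n)^{N}\bigr)_{n(d-1)}\subset I_{n(d-1)}$, take the grevlex-smallest counterexample, and then contradict the weight hypothesis. The only cosmetic difference is that the paper shows $w_\rho(L)>0$ in one stroke via the rearrangement $w_\rho(L)>\sum_{i<a}c_iw_i+(c_a+1)w_a+(n-a)(d-1)w_n\ge 0$, whereas you first establish $w_\rho(M^\dagger)\ge 0$ by Abel summation and then extract the contradiction from the equality case; these are the same computation. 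One small slip: you begin by taking a \emph{monomial} $m\in k[x_{a+1},\dots,x_n]_N\setminus I$, but the negation of the lemma only gives an \emph{element}; this is harmless, since every monomial of $cm$ still lies in $(x_{a+1},\dots,x_n)^N$.
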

\begin{proof} Since $I$ is a homogeneous ideal
in $S$ such that $S/I$ is a Gorenstein Artin $k$-algebras of socle
degree $n(d-1)$, 
by Corollary \ref{L:gorenstein}(1), it suffices to show that 
$\left((x_{a+1}, \dots,x_n)^{(n-a)(d-1)+1}\right)_{n(d-1)}\subset I_{n(d-1)}$.
Assume the opposite and let $L=x_1^{c_1} x_2^{c_2} \cdots x_a^{c_a} x_{a+1}^{c_{a+1}}\cdots x_n^{c_n}$ be the smallest with respect to $<_{grevlex}$ monomial in $\left((x_{a+1}, \dots,x_n)^{(n-a)(d-1)+1}\right)_{n(d-1)}$ that is not in $I_{n(d-1)}$. 
Then by Lemma \ref{grevlex}, for every $i\leq a$, we have
$c_1+\cdots+c_{i} \leq i(d-1)$. Moreover, $c_1+\cdots+c_a \leq a(d-1)-1$ by assumption.
But then 
\[
w_{\rho} (L)>\sum_{i=1}^{a-1}c_iw_i+(c_a+1)w_a+(n-a)(d-1)w_n\ge 0.
\]
As $L$ has positive $\rho$-weight, it must lie in $I_{n(d-1)}$ by the assumption that the limit $\lim_{t\to 0} \rho(t)\cdot \overline{\AA(U)}$ exists, which contradicts our choice of $L$.
\end{proof}

Lemmas \ref{L:complete} and \ref{L:B-cond} imply that both conditions (A) and (B) 
of Proposition \ref{P:recognition} are satisfied. Hence $U$ is decomposable, contradicting 
our assumption. This proves that for every one-parameter subgroup $\rho$ of $\SL(n)$ the limit $\lim_{t\to 0} \rho(t)\cdot \overline{\AA(U)}$ does not exist. By the Hilbert-Mumford numerical criterion we then see that $\AA(U)$ is polystable. 
\qedsymbol

We note that our proof in fact gives a more technical version of Theorem \ref{T:polystability}.
\begin{theorem}\label{A}
Suppose $U\in \Grass(n,S_d)_{\Res}$. If $U$ is indecomposable, then for every one-parameter
subgroup $\rho$ of $\SL(n)$ defined over $k$ the limit $\lim_{t\to 0} \rho(t)\cdot \overline{\AA(U)}$ does not exist. 
In particular, $\AA(U)$ is polystable. Furthermore, 
if $U$ is indecomposable over $\bar k$, then $\AA(U)$ is stable.
\end{theorem}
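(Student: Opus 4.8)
The plan is to get the first two assertions essentially for free from the indecomposable-case analysis just carried out in \S\ref{S:polystability}, and then to deduce the stability clause by rerunning that analysis over $\bar k$. Observe first that the argument of \S\ref{S:polystability} assumes only that $U$ is indecomposable (polystability of $U$ was used solely in the decomposable case): it shows that the existence of $\lim_{t\to0}\rho(t)\cdot\overline{\AA(U)}$ for some nontrivial $k$-rational 1-PS $\rho$ would force conditions (A) and (B) of Proposition \ref{P:recognition} via Lemmas \ref{L:weights}, \ref{L:complete}, and \ref{L:B-cond}, hence the decomposability of $U$---a contradiction. Thus no such limit exists for any $k$-rational $\rho$, and, $k$ being perfect, the Hilbert-Mumford criterion in Kempf's form \cite{kempf} gives polystability of $\AA(U)$. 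This is exactly the first part of the theorem, so it remains only to treat the stability clause.

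For stability, the key observation is that nothing in the indecomposable-case argument is sensitive to the base field: the Grevlex Lemma \ref{grevlex} and Lemmas \ref{L:weights}--\ref{L:B-cond} use only that $I$ is generated in degree $d$ and has codimension $n$, while Proposition \ref{P:recognition} is valid over an arbitrary field. The plan is therefore to base change to $\bar k$. Writing $U_{\bar k}:=U\otimes_k\bar k\in\Grass\bigl(n,(S_d)_{\bar k}\bigr)_{\Res}$ and using that the associated form morphism $\AA$ is defined scheme-theoretically over $k$ and hence commutes with field extension, we have $\AA(U)_{\bar k}=\AA(U_{\bar k})$. By hypothesis $U$ is indecomposable over $\bar k$, i.e.\ $U_{\bar k}$ is an indecomposable $\bar k$-point, so the identical chain of implications---now run for an arbitrary 1-PS $\rho$ of $\SL(n)_{\bar k}$, diagonalized in a $\bar k$-basis of $(S_1)_{\bar k}$---shows that $\lim_{t\to0}\rho(t)\cdot\overline{\AA(U_{\bar k})}$ fails to exist for every nontrivial $\rho$.

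It then remains to translate ``no limit exists'' into stability. Nonexistence of $\lim_{t\to0}\rho(t)\cdot\overline{\AA(U_{\bar k})}$ in $(D_{n(d-1)})_{\bar k}$ says precisely that the lift has a monomial of strictly negative $\rho$-weight, i.e.\ that the Mumford index satisfies $\mu(\AA(U_{\bar k}),\rho)>0$. Since this holds for \emph{every} nontrivial one-parameter subgroup over the algebraically closed field $\bar k$, the Hilbert-Mumford numerical criterion for stability is met, so $\AA(U_{\bar k})$ is stable; as stability is unchanged under algebraically closed base extension, $\AA(U)$ is stable.

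I expect the only genuinely new input---and the main point to get right---to be the passage from $k$-rational to $\bar k$-rational 1-PS. Over a non-closed field, Kempf's rationality of optimal one-parameter subgroups yields polystability from $\mu>0$ for all $k$-rational nontrivial $\rho$, but \emph{not} stability, since a 1-PS defined only over $\bar k$ might have $\mu=0$; strengthening the hypothesis to indecomposability over $\bar k$ is exactly what permits testing all geometric 1-PS. This sharpness is not accidental: if $U$ were indecomposable over $k$ but became a direct sum over $\bar k$, then the $\bar k$-rational 1-PS $\lambda$ of Lemma \ref{L:luna} (equivalently that of Proposition \ref{P:decomposable}) would fix $\overline{\AA(U_{\bar k})}$, giving a positive-dimensional geometric stabilizer, so $\AA(U)$ would be polystable but not stable. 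The remaining steps are the routine verifications that $\AA$, the Grevlex Lemma, and Lemmas \ref{L:weights}--\ref{L:B-cond} are compatible with the extension $k\subset\bar k$.
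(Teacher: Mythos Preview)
Your proposal is correct and follows exactly the approach the paper has in mind: Theorem~\ref{A} is stated as a direct consequence of the indecomposable-case analysis already carried out, and your expansion of the stability clause via base change to $\bar k$ and rerunning Lemmas~\ref{L:weights}--\ref{L:B-cond} and Proposition~\ref{P:recognition} over $\bar k$ is precisely the intended argument. Your closing remark on why indecomposability over $k$ alone yields polystability but not stability (the possible $\bar k$-rational 1-PS in the stabilizer) matches the paper's illustrative example immediately following the theorem.
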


Notice that, over a non-closed field, the indecomposability of an element $U\in \Grass(n,S_d)_{\Res}$ does not imply on its own that $\AA(U)$ is stable. Indeed, if $U$ is indecomposable over $k$, it is possible for $U$ to be decomposable over $\bar k$. For example, working over $\CC$,
\[
\AA\bigl((x_1+ix_2)^{d}+(x_1-ix_2)^{d}, i((x_1+ix_2)^{d}-(x_1-ix_2)^{d})\bigr)=\langle (z_1^2+z_2^2)^{d-1}\rangle
\]
is not stable while the balanced complete intersection is defined and indecomposable over $\RR$.

We also note that Theorem \ref{A} has a curious consequence for the classification of 
polystable points in $\Grass(n,S_d)_{\Res}$:
\begin{corollary}\label{C:polystable-bci} 
 Suppose $U\in \Grass(n,\Sym^{d} V)_{\Res}$. Then $U$ is polystable with respect
to $\SL(V)$-action if and only if
there is a decomposition $V=\bigoplus_{i=1}^m V_i$, where $m\geq 1$ and $n_i:=\dim_k V_i \geq 1$, 
and indecomposable $U_i \in \Grass(n_i, \Sym^{d} V_i)_{\Res}$ such that 
\begin{equation}\label{E:direct-sum}
U=\bigoplus_{i=1}^m U_i.
\end{equation}
\end{corollary}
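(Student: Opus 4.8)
The plan is to deduce both implications from two ingredients: a Grassmannian analogue of Lemma~\ref{L:luna}, and the biconditional that $U$ is polystable if and only if its associated form $\AA(U)$ is. Throughout I will use that $\Grass(n,\Sym^{d}V)_{\Res}$ is an affine $\SL(V)$-variety, being the non-vanishing locus on the projective Grassmannian of the $\SL(V)$-invariant section $\Res$. In particular every point is semistable, and a non-closed orbit degenerates, along a one-parameter subgroup defined over the perfect field $k$, to a strictly smaller orbit that again lies in $\Grass(n,\Sym^{d}V)_{\Res}$ (Hilbert--Mumford for affine quotients, \cite{kempf}).

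First I will record the Grassmannian version of Lemma~\ref{L:luna}: if $V=V_{1}\oplus V_{2}$ and $U=U_{1}\oplus U_{2}$ with $U_{i}\in\Grass(n_{i},\Sym^{d}V_{i})_{\Res}$, then $U$ is $\SL(V)$-polystable if and only if $U_{i}$ is $\SL(V_{i})$-polystable for $i=1,2$. This follows from exactly the argument of Lemma~\ref{L:luna}: the one-parameter subgroup $\lambda$ acting with weight $-n_{2}$ on $V_{1}$ and $n_{1}$ on $V_{2}$ scales each $\Sym^{d}V_{i}$, hence fixes the point $[U]\in\Grass(n,\Sym^{d}V)_{\Res}$, so \cite[Corollary~4.5(a)]{kempf} reduces closedness of the $\SL(V)$-orbit to closedness of the $C_{\SL(V)}(\lambda)=(\GL(V_{1})\times\GL(V_{2}))\cap\SL(V)$-orbit. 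Since the central torus of this centralizer acts on $[U]$ by scaling each $U_{i}$, it fixes $[U]$, so the orbit coincides with the $(\SL(V_{1})\times\SL(V_{2}))$-orbit, which is closed precisely when each $\SL(V_{i})$-orbit of $U_{i}$ is closed. This single lemma handles both the direct-sum bookkeeping and the inductive steps below.

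Next comes the crux. For $U\in\Grass(n,\Sym^{d}V)_{\Res}$, the implication that $U$ polystable $\Rightarrow$ $\AA(U)$ polystable is Theorem~\ref{T:polystability}; I will prove the converse contrapositively. If $U$ is not polystable, choose a one-parameter subgroup $\rho$ of $\SL(V)$ with $U_{0}:=\lim_{t\to0}\rho(t)\cdot U\in\Grass(n,\Sym^{d}V)_{\Res}$ and $U_{0}\notin\SL(V)\cdot U$, as permitted by the first paragraph. Applying the continuous, $\SL(V)$-equivariant morphism $\AA$ and using $\AA(\rho(t)\cdot U)=\rho(t)\cdot\AA(U)$ gives $\lim_{t\to0}\rho(t)\cdot\AA(U)=\AA(U_{0})$. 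Since $\AA$ is injective (Theorem~\ref{macaulay-theorem}) and equivariant, $\AA(U_{0})\notin\SL(V)\cdot\AA(U)$, while $\AA(U_{0})$ is semistable by \S\ref{SS:semistability}; hence $\AA(U)$ is not polystable. Combined with Theorem~\ref{A}, this biconditional shows at once that every indecomposable $U$ is polystable: indecomposability yields polystability of $\AA(U)$ by Theorem~\ref{A}, whence polystability of $U$. I expect this transfer of a destabilizing one-parameter subgroup through $\AA$, and in particular the control keeping its limit inside the resultant locus $\Grass(n,\Sym^{d}V)_{\Res}$ where $\AA$ is defined, to be the main obstacle.

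Finally I will assemble the corollary. For the forward direction, if $U$ is polystable and decomposable, Corollary~\ref{C:decomposable} writes $U=U_{1}\oplus U_{2}$ over complementary sets of variables; the Grassmannian Luna lemma forces $U_{1},U_{2}$ to be polystable, and induction on $n=\dim V$ refines each into indecomposable summands, producing the decomposition \eqref{E:direct-sum}. For the reverse direction, given $U=\bigoplus_{i=1}^{m}U_{i}$ with each $U_{i}$ indecomposable, the consequence of the biconditional established above makes each $U_{i}$ polystable, and iterating the Grassmannian Luna lemma over the $m$ summands shows that $U$ is polystable. Both inductions terminate because the dimensions $n_{i}$ strictly decrease, with base case the trivially polystable indecomposable $(x_{1}^{d})\subset k[x_{1}]$.
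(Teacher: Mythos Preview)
Your proof is correct and uses the same core ingredients as the paper---Theorem~\ref{A}, a Luna/Kempf closed-orbit lemma, and the implication ``$\AA(U)$ polystable $\Rightarrow$ $U$ polystable''---but you package them differently in two respects worth noting. First, for the reverse direction the paper argues at the level of forms: it shows $\AA(U)=\prod_i \AA(U_i)$ is polystable via Lemma~\ref{L:luna} and then deduces polystability of $U$ from the fact (cited from \cite{alper-isaev-assoc-binary}) that $\AA$ is an equivariant locally closed immersion. You instead prove the biconditional ``$U$ polystable $\Leftrightarrow$ $\AA(U)$ polystable'' directly, using only Macaulay injectivity together with Kempf's degeneration theorem on the affine variety $\Grass(n,S_d)_{\Res}$; this is more self-contained, as it needs only injectivity of $\AA$ rather than the stronger locally closed immersion statement. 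Second, you formulate and use a Grassmannian analogue of Lemma~\ref{L:luna} as an \emph{equivalence} (invoking both directions of \cite[Corollary 4.5]{kempf}), which lets you show that the summands $U_1,U_2$ produced by Corollary~\ref{C:decomposable} are themselves polystable and hence run a clean induction on $\dim V$ for the forward direction. The paper's terser ``apply Corollary~\ref{C:decomposable} repeatedly'' does not pass through polystability of the summands; it instead observes that if a summand is still decomposable then $U$ itself is decomposable along a finer split, and one re-applies Proposition~\ref{P:decomposable} using polystability of $U$ only. Both routes work; yours is slightly more transparent at the cost of needing the two-sided Kempf/Luna statement.
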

\begin{proof}
Applying Corollary \ref{C:decomposable} repeatedly, we see that a polystable $U$ is  
of the form given by Equation \eqref{E:direct-sum} with each $U_i$ indecomposable.

It remains to show that every $U$ with such decomposition is polystable. By 
Theorem \ref{A} each $\AA(U_i)$ is polystable, and since $\AA(U)=\AA(U_1)\cdots \AA(U_m)$ by Lemma \ref{L:decomposable}, the polystability 
of $\AA(U)$ follows by Lemma \ref{L:luna}. Since $\AA$ is an $\SL(V)$-equivariant locally closed
immersion by \cite[\S2.5]{alper-isaev-assoc-binary}, this
implies the polystability of $U$.
\end{proof}
\begin{remark}\label{R:indecomposable-not-direct} It follows from Corollary \ref{C:polystable-bci} that every non-polystable
element $U \in \Grass(n,\Sym^{d} V)_{\Res}$ is necessarily decomposable, but 
cannot be written as a direct sum $\bigoplus_{i=1}^m U_i$, where
$V=\bigoplus_{i=1}^m V_i$ and the elements $U_i \in \Grass(\dim V_i, \Sym^{d} V_i)_{\Res}$
are indecomposable.
\end{remark}

\section{Invariant-theoretic variant of the Mather-Yau theorem}
\label{S:mather-yau}
As before, we continue to work over an arbitrary field $k$
of characteristic $0$. Fix $d\geq 2$
and let $(S_{d+1})_{\Delta}$ be the affine open subset 
in $S_{d+1}$ of forms defining smooth hypersurfaces in $\PP^{n-1}$.
An element $F\in (S_{d+1})_{\Delta}$ defines an isolated
homogeneous hypersurface singularity $F(x_1,\dots,x_n)=0$ in $k^n$. 
The Jacobian ideal $J_F:=(\partial F/\partial x_1,\dots, \partial F/\partial x_n)$
is a balanced complete intersection ideal, and so the Milnor
algebra $M_F:=S/J_F$ has a Macaulay inverse system given by the
associated form
\[
A(F):=\AA(\partial F/\partial x_1,\dots, \partial F/\partial x_n)\in D_{n(d-1)}.
\]
The morphism
$A\colon (S_{d+1})_{\Delta}\to D_{n(d-1)}$ gives rise to
an $\SL(n)$-contravariant
\[
S_{d+1} \to D_{n(d-1)}
\]
(see \cite{alper-isaev-kruzhilin} and \cite{isaev-contravariant} for details).

We will say 
that for $F,G\in (S_{d+1})_\Delta$, two singularities $F=0$ and $G=0$ are isomorphic if and only if 
\begin{equation}\label{E:hom}
\bar{k}[[x_1,\dots, x_n]]/(F)\simeq \bar{k}[[x_1,\dots,x_n]]/(G)
\end{equation}
as algebras over the algebraic closure $\bar k$ of $k$. This condition is equivalent to the existence of 
a matrix $C\in \GL(n)$, defined over $\bar k$, such that $G=C\cdot F$. Indeed, the isomorphism in \eqref{E:hom} lifts to an automorphism of the power series ring $\bar{k}[[x_1,\dots, x_n]]$ (see \cite[Lemma 1.23]{greuel-lossen-shustin}), which is given by a change of variables, and we take $C$ to be its linear part. Note, however, that such a $C$ 
does not have to exist over $k$ as the example of $F=x_1^4-x_2^4$ and $G=x_1^4+x_2^4$ in $\RR[x_1,x_2]$
illustrates. Nevertheless, Equation \eqref{E:hom} is equivalent to the equality of schemes $\GL(n)\cdot F=\GL(n)\cdot G$.

Our results imply that the morphism $A$ sends forms with non-zero
discriminant to polystable forms,
and from this fact we deduce an invariant-theoretic version of the Mather-Yau
theorem (see \cite{mather-yau}).

\begin{theorem}\label{T:invariant-MY} 
There exists a finite collection
of homogeneous $\SL(n)$-inva\-riants $\mathfrak{I}_1,\dots, \mathfrak{I}_N$ 
on $D_{n(d-1)}$ of equal degrees, defined over $k$,  
such that for any two forms $F, G\in (S_{d+1})_{\Delta}$, the isolated
homogeneous hypersurface singularities $F=0$ and $G=0$ are isomorphic 
if and only if
\[
[\mathfrak{I}_1(A(F)): \cdots:\mathfrak{I}_N(A(F))]
=[\mathfrak{I}_1(A(G)): \cdots:\mathfrak{I}_N(A(G))].
\]
\end{theorem}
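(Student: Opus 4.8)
The plan is to combine the polystability result (Theorem \ref{T:polystability}, equivalently Theorem \ref{A}) with the standard GIT fact that polystable orbits are separated by invariants, and to reduce the geometric statement about isomorphism of singularities to an orbit-equality statement about associated forms. First I would recall from the preceding discussion that for $F, G \in (S_{d+1})_\Delta$, the singularities $F=0$ and $G=0$ are isomorphic precisely when $\GL(n)\cdot F = \GL(n)\cdot G$ as schemes, i.e.\ when $G = C\cdot F$ for some $C \in \GL(n)$ defined over $\bar k$. The key structural input is that the associated form morphism $A$ is $\SL(n)$-equivariant and injective (indeed a locally closed immersion by \cite[\S2.5]{alper-isaev-assoc-binary} and Macaulay's Theorem \ref{macaulay-theorem}); this lets me transport questions about $\GL(n)$-orbits of the $F$'s to questions about $\SL(n)$-orbits of the $A(F)$'s in $\PP D_{n(d-1)}$. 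Because the Jacobian ideal $J_F$ is a balanced complete intersection for $F \in (S_{d+1})_\Delta$, Theorem \ref{T:polystability} (via Theorem \ref{A}) guarantees that $A(F)$ is always a polystable point of $\PP D_{n(d-1)}$.

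The main work is to verify the chain of equivalences
\begin{equation*}
\GL(n)\cdot F = \GL(n)\cdot G \iff \SL(n)\cdot A(F) = \SL(n)\cdot A(G) \text{ in } \PP D_{n(d-1)}.
\end{equation*}
For the forward direction, if $G = C\cdot F$ with $C \in \GL(n)$, I would normalize by the scalar $\det(C)^{1/n}$ (extracting an $n$th root over $\bar k$) to write $C = \zeta \cdot C_0$ with $C_0 \in \SL(n)$ and $\zeta$ a scalar; since $A$ is an $\SL(n)$-equivariant morphism into projective space and the scalar action is absorbed projectively, one gets $A(G) = \lambda\, C_0 \cdot A(F)$ for some scalar $\lambda$, whence the two $\SL(n)$-orbits in $\PP D_{n(d-1)}$ coincide. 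For the reverse direction, if the projective $\SL(n)$-orbits of $A(F)$ and $A(G)$ agree, then $A(G) = B\cdot A(F)$ projectively for some $B \in \SL(n)$; equivariance gives $A(G) = A(B\cdot F)$ as points of $\PP D_{n(d-1)}$, and injectivity of $A$ then forces $G$ and $B\cdot F$ to agree up to scalar, which after rescaling variables places them in the same $\GL(n)$-orbit.

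Having established that $F=0$ and $G=0$ are isomorphic iff $A(F)$ and $A(G)$ lie in the same $\SL(n)$-orbit of $\PP D_{n(d-1)}$, I would invoke the Gordan--Hilbert finiteness theorem to produce finitely many homogeneous $\SL(n)$-invariants $\mathfrak{I}_1,\dots,\mathfrak{I}_N$ on $D_{n(d-1)}$ of a common degree that generate the invariant ring in that degree and cut out the GIT quotient, exactly as in the discussion around \eqref{E:distinguish}. Since $A(F)$ and $A(G)$ are both \emph{polystable}, the fundamental separation property of GIT (polystable orbits map bijectively to points of $\PP D_{n(d-1)}^{ss}\gitq \SL(n)$, and are therefore distinguished by invariants) yields that the two orbits coincide iff $[\mathfrak{I}_1(A(F)):\cdots:\mathfrak{I}_N(A(F))] = [\mathfrak{I}_1(A(G)):\cdots:\mathfrak{I}_N(A(G))]$. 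Chaining this with the previous equivalence gives the theorem.

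The step I expect to be the main obstacle is the careful handling of scalars and of the field of definition in the two-sided orbit equivalence. The issue is that $A$ lands in projective space, so I must track the $\GL(n)$ versus $\SL(n)$ distinction and the projective rescaling precisely --- in particular verifying that passing from a $\GL(n)$-relation $G=C\cdot F$ to an $\SL(n)$-relation on associated forms, and back, does not lose or introduce spurious solutions. One must also confirm that the invariants can be chosen over $k$ (so that the numerical criterion of equality of invariant ratios is meaningful over the base field), while the isomorphism of singularities is tested over $\bar k$; the compatibility here rests on the fact that polystability is detected by the Hilbert--Mumford criterion over the perfect field $k$, as noted in the Notation section, together with the $k$-rationality of the invariant ring guaranteed by Gordan--Hilbert.
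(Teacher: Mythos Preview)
Your argument has a genuine gap in the reverse implication. You assert that the associated form morphism $A\colon \PP(S_{d+1})_\Delta \to \PP D_{n(d-1)}$ is injective, citing \cite[\S2.5]{alper-isaev-assoc-binary} and Macaulay's theorem. But that reference concerns the morphism $\AA$ on the Grassmannian, not $A$ on forms; the map $A$ factors as $\AA\circ\nabla$, and the gradient morphism $\nabla\colon [F]\mapsto \langle \partial_1 F,\dots,\partial_n F\rangle$ is \emph{not} injective. For instance, $F=x_1^{d+1}+x_2^{d+1}$ and $G=x_1^{d+1}+2x_2^{d+1}$ satisfy $\nabla F=\nabla G=\langle x_1^d,x_2^d\rangle$, hence $A(F)=A(G)$, yet $[F]\neq[G]$. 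So the step ``injectivity of $A$ then forces $G$ and $B\cdot F$ to agree up to scalar'' fails. What you actually need is that $A(F)=A(G)$ implies $\GL(n)\cdot F=\GL(n)\cdot G$, i.e.\ that forms with the same Jacobian ideal are linearly equivalent. This is true but nontrivial---it is essentially the homogeneous Mather--Yau statement you are trying to prove, so invoking it without justification is circular. The paper handles this by citing the separate result \cite[Proposition 2.1(2)]{fedorchuk-ss} that $\nabla\gitq\SL(n)$ is injective on GIT quotients.

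There is a second, smaller gap: to apply Theorem~\ref{T:polystability} you need $\nabla F\in\Grass(n,S_d)_{\Res}$ to be polystable, not merely to lie in $\Grass(n,S_d)_{\Res}$. Theorem~\ref{A} gives polystability of $\AA(U)$ only for indecomposable $U$, and Theorem~\ref{T:polystability} requires polystability of $U$ as a hypothesis. The paper obtains the polystability of $\nabla F$ from \cite[Theorem 1.1]{fedorchuk-ss}; you do not supply this input. Once these two external results are in place, the paper's proof and yours follow the same outline: factor $A=\AA\circ\nabla$, pass to GIT quotients, use polystability to separate orbits by invariants, and conclude via Gordan--Hilbert.
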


\begin{remark} Our results show that the Mather-Yau theorem in the homogeneous situation can be extended to the case of an arbitrary field $k$ of characteristic 0 by stating that for $F, G\in (S_{d+1})_{\Delta}$, the singularities $F=0$ and $G=0$ are isomorphic if and only if 
$M_F\otimes_{k} \bar k$ and $M_G\otimes_{k} \bar k$ are isomorphic as $\bar k$-algebras.
The main novelty of Theorem \ref{T:invariant-MY} is in showing that one can check whether 
such an isomorphism exists simply by evaluating \emph{finitely many} $\SL(n)$-invariants on 
the associated forms of $M_F$ and $M_G$, 
and that this can be done without passing to the algebraic closure of $k$. 
\end{remark}   

To prove this result, we will need the following immediate consequence of
Theorem \ref{T:polystability} for the geometry of the associated form morphism
(cf. \S\ref{S:ci}): \[\AA\colon \Grass(n, S_d)_{\Res} \to
\PP D_{n(d-1)}.\]
 
\begin{corollary}\label{C:polystability}
 The induced morphism of GIT quotients
\[
\AA\gitq \SL(n) \colon \Grass(n, S_d)_{\Res}\gitq \SL(n) \to \PP
D_{n(d-1)}^{ss}\gitq \SL(n)
\]
is a locally closed immersion.
\end{corollary}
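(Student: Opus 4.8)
The plan is to deduce Corollary \ref{C:polystability} from the main Theorem \ref{T:polystability} together with the already-established good properties of the associated form morphism $\AA$. The strategy rests on the principle that an $\SL(n)$-equivariant morphism that behaves well on the level of semistable loci descends to the GIT quotients, and that injectivity and local-closedness are inherited by the descended map once we know polystable orbits go to polystable orbits. First I would recall the two inputs about $\AA$ recorded earlier in the excerpt: it is an $\SL(n)$-equivariant morphism (by \cite[Lemma 2.7]{alper-isaev-assoc-binary}), and it is a locally closed immersion (by \cite[\S2.5]{alper-isaev-assoc-binary}). The new ingredient from this paper is Theorem \ref{T:polystability}, which guarantees that $\AA$ sends polystable points to polystable points; combined with the semistability result of \S\ref{SS:semistability}, the image of $\AA$ lands inside the semistable locus $\PP D_{n(d-1)}^{ss}$, so the target of the descended map is well-defined.

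The key steps, in order, are as follows. First, I would observe that since every $U \in \Grass(n,S_d)_{\Res}$ is semistable (by the semistability half of Theorem \ref{T:polystability}, proved in \S\ref{SS:semistability}), the entire affine open $\Grass(n,S_d)_{\Res}$ is contained in the semistable locus, so the GIT quotient $\Grass(n, S_d)_{\Res}\gitq \SL(n)$ makes sense and the source of the arrow is legitimate. Second, because $\AA$ carries semistable points to semistable points and, crucially by Theorem \ref{T:polystability}, polystable points to polystable points, the universal property of the GIT quotient produces a well-defined induced morphism $\AA\gitq \SL(n)$ on quotients. Third, to see injectivity of the induced map, I would use that points of the GIT quotient correspond to \emph{polystable} orbits (closed orbits in the semistable locus): two polystable $U, U'$ map to the same point of the target exactly when $\AA(U)$ and $\AA(U')$ have the same closed orbit in $\PP D_{n(d-1)}^{ss}$; since $\AA(U), \AA(U')$ are themselves polystable by Theorem \ref{T:polystability}, their orbits are already closed, so equality of images forces $\SL(n)\cdot \AA(U)=\SL(n)\cdot \AA(U')$, and equivariant injectivity of $\AA$ (as a locally closed immersion) then yields $\SL(n)\cdot U=\SL(n)\cdot U'$. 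Finally, to upgrade injectivity to the statement that $\AA\gitq \SL(n)$ is a locally closed immersion, I would invoke that $\AA$ is an $\SL(n)$-equivariant locally closed immersion and appeal to the general descent principle that such a morphism induces a locally closed immersion on the good quotients of the (poly)stable loci; the separatedness needed is supplied by the fact that distinct closed orbits map to distinct closed orbits.

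The main obstacle I anticipate is purely formal but must be handled with care: ensuring that the induced morphism is genuinely a locally closed \emph{immersion} and not merely an injection on $k$-points. The descent of a locally closed immersion through a GIT quotient is not automatic; it requires knowing that the map is injective on orbits \emph{and} that it is an immersion at the scheme-theoretic level, which is where the preservation of polystability does the real work — it guarantees that closed orbits (the ones parametrized by the quotient) map to closed orbits, so that the quotient map restricted to the relevant saturated opens is compatible with $\AA$. Concretely, the hard part will be invoking the correct functorial statement (e.g., that for a reductive group, an equivariant locally closed immersion of affine $G$-varieties inducing a bijection on closed orbits descends to a locally closed immersion of GIT quotients) and verifying its hypotheses in our situation, using that $\Grass(n,S_d)_{\Res}$ is contained in the semistable locus and that $\AA$ preserves the closedness of orbits. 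Everything else reduces to bookkeeping with the universal property of $\gitq$ and the equivariance already established in \cite{alper-isaev-assoc-binary}.
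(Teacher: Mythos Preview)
Your approach is essentially the same as the paper's, and you have correctly isolated the one genuinely delicate point: passing from ``$\AA$ is an equivariant locally closed immersion and preserves polystability'' to ``$\AA\gitq\SL(n)$ is a locally closed immersion.'' Where you invoke an unspecified ``general descent principle,'' the paper gives the concrete mechanism. Namely, from \cite[\S2.5]{alper-isaev-assoc-binary} one knows that $\AA$ identifies $\Grass(n,S_d)_{\Res}$ with an $\SL(n)$-invariant open subset $O$ of an $\SL(n)$-invariant closed subscheme $Z\hookrightarrow \PP D_{n(d-1)}$; semistability gives $O\subset Z^{ss}$. The point is then that Theorem~\ref{T:polystability} forces $O$ to be a \emph{saturated} open subset of $Z^{ss}$: for any $x=\AA(U)\in O$, the unique closed orbit in $\overline{\SL(n)\cdot x}\cap Z^{ss}$ is $\SL(n)\cdot\AA(U_0)$ for the polystable $U_0$ in the orbit closure of $U$ inside $\Grass(n,S_d)_{\Res}$ (such $U_0$ exists since $\Res$ is an $\SL(n)$-invariant, so $\Grass(n,S_d)_{\Res}$ is itself saturated in $\Grass(n,S_d)^{ss}$), and this closed orbit lies in $O$. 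Saturation then yields immediately that $O\gitq\SL(n)$ is open in $Z^{ss}\gitq\SL(n)$, which in turn is closed in $\PP D_{n(d-1)}^{ss}\gitq\SL(n)$; composing gives the locally closed immersion. This is exactly the content of the functorial statement you were reaching for, and once phrased this way no further black box is needed.

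One small correction: your first step asserts that every $U\in\Grass(n,S_d)_{\Res}$ is semistable ``by the semistability half of Theorem~\ref{T:polystability}.'' That theorem and \S\ref{SS:semistability} concern the semistability of $\AA(U)$, not of $U$. The semistability of $U$ itself (and the fact that $\Grass(n,S_d)_{\Res}$ is a saturated open in the semistable locus of the Grassmannian) follows instead from the $\SL(n)$-invariance of the resultant.
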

\begin{proof}
By \cite[\S2.5]{alper-isaev-assoc-binary}, $\Grass(n, S_d)_{\Res}$
maps isomorphically via $\AA$ to an $\SL(n)$-invariant open subset, say $O$,
of an $\SL(n)$-invariant closed subscheme $Z\hookrightarrow \PP D_{n(d-1)}$.
By \cite[Theorem 1.2]{fedorchuk-ss}, the image of $\AA$ lies
in the semistable locus $Z^{ss}$, and Theorem
\ref{T:polystability} implies that $O$ is a saturated open subset of $Z^{ss}$. 
The corollary now follows.
\end{proof}

\begin{proof}[{Proof of Theorem \ref{T:invariant-MY}}]
Note that the two singularities are isomorphic if and only if
$\GL(n)\cdot F=\GL(n)\cdot G$ in $S_{d+1}$, which by the GIT stability
of smooth hypersurfaces is equivalent
to the fact that $F$ and $G$ map to the same point
in the GIT quotient $\PP(S_{d+1})_{\Delta}\gitq \SL(n)$.

Recall from \cite[\S2.3]{alper-isaev-assoc-binary} that
the (projectivized) morphism $A\colon \PP(S_{d+1})_{\Delta} \to \PP
D_{n(d-1)}$ factors as the composition
of the gradient morphism \[\nabla\colon \PP(S_{d+1})_{\Delta} \to
\Grass(n, S_d)_{\Res},\]
defined by $\nabla F:= \langle \partial F/\partial x_1,\dots, \partial
F/\partial x_n\rangle$,
and the associated form morphism \[\AA\colon \Grass(n, S_d)_{\Res} \to
\PP D_{n(d-1)}.\]
By Corollary \ref{C:polystability},
the induced morphism of GIT quotients
\[
\AA\gitq \SL(n) \colon \Grass(n, S_d)_{\Res}\gitq \SL(n) \to \PP
D_{n(d-1)}^{ss}\gitq \SL(n)
\]
is a locally closed immersion.

Next, by \cite[Theorem 1.1]{fedorchuk-ss}, we have that $\nabla(F)$ is
polystable for every
$F\in \PP (S_{d+1})_{\Delta}$. Moreover, by \cite[Proposition 2.1 (2)]{fedorchuk-ss} 
the induced morphism on the
GIT quotients
\[
\nabla\gitq \SL(n) \colon \PP(S_{d+1})_{\Delta}\gitq \SL(n) \to
\Grass(n, S_d)_{\Res}\gitq \SL(n)
\]
is injective. We conclude that the composition morphism
\[
A\gitq \SL(n) \colon \PP(S_{d+1})_{\Delta}\gitq \SL(n) \to \PP
D_{n(d-1)}^{ss}\gitq \SL(n)
\]
is injective.
The theorem
now follows from the definition of the GIT quotient $\PP
D_{n(d-1)}^{ss}\gitq \SL(n)$
and the fact that the ring of $\SL(n)$-invariant forms on $D_{n(d-1)}$
is finitely generated
by the Gordan-Hilbert theorem.
\end{proof}

\begin{example}
We conclude with an example showing that our GIT stability results are
optimal as far as complete intersection
Artinian algebras are concerned. Consider the following quasi-homogeneous form:
\[
F(x_1,\dots,x_n)=x_1^{d_1+1}+\cdots+x_n^{d_n+1}.\]
Then the Milnor algebra of $F$ is
\[
M_F=S/(x_1^{d_1},\dots, x_n^{d_n}),
\]
which is a complete intersection Gorenstein Artin $k$-algebra of socle
degree $(d_1+\cdots+d_n)-n$. The homogeneous Macaulay inverse system
of this algebra (up to a non-zero scalar) is
\[
z_1^{d_1-1}\cdots z_n^{d_n-1}.
\]
Unless
$d_1=\cdots=d_n$,
this form is patently unstable with respect to the $\SL(n)$-action on
$D_{(d_1+\cdots+d_n)-n}$,
and so all homogeneous $\SL(n)$-invariants vanish on it.
\end{example}

\bibliographystyle{amsplain}

\end{document}